\newtheorem{theorem}{Theorem}[section]
\newaliascnt{corollary}{theorem}
\newaliascnt{lemma}{theorem}
\newtheorem{lemma}[lemma]{Lemma}
\newaliascnt{proposition}{theorem}
\newtheorem{proposition}[proposition]{Proposition}
\newtheorem*{zc*}{Zassenhaus Conjecture (ZC)}
\newtheorem*{pq*}{Prime Graph Question (PQ)}
\newtheorem*{theorem*}{Theorem}
\theoremstyle{definition}
\newaliascnt{definition}{theorem}
\newtheorem{definition}[definition]{Definition}
\theoremstyle{remark}
\newaliascnt{example}{theorem}
\newtheorem{example}[example]{Example}
\newaliascnt{remark}{theorem}
\newtheorem{remark}[remark]{Remark}
\newaliascnt{notation}{theorem}
\definecolor{Schweinchenrosa}{HTML}{FFAAAA}
\title{On the Prime Graph Question for Integral Group Rings of Conway simple groups}
\author{Leo Margolis}
\address{Vrije Universiteit Brussel, Faculty of Sciences, Department of Mathematics, Pleinlaan 2, B-1050 Brussels, Belgium}
\email{leo.margolis@vub.be}
\thanks{This research was partially supported by the European Commission individual Marie Curie-Sk\l odowska Fellowship in the H2020 program under Grant 705112-ZC and the FWO (Research Foundation Flanders)}
\newcommand\blfootnote[1]{%
	\begingroup
	\renewcommand\thefootnote{}\footnote{#1}%
	\addtocounter{footnote}{-1}%
	\endgroup
}
\begin{document}

\maketitle

\begin{abstract} The Prime Graph Question for integral group rings asks if it is true that if the normalized unit group of the integral group ring of a finite group $G$ contains an element of order $pq$, for some primes $p$ and $q$, also $G$ contains an element of that order. We answer this question for the three Conway sporadic simple groups after reducing it to a combinatorial question about Young tableaux and Littlewood-Richardson coefficients. This finishes work of V. Bovdi, A. Konovalov and S. Linton.
\end{abstract}

\blfootnote{\textit{2010 Mathematics Subject Classification}. Primary 16U60, 16S34. Secondary 20C05, 20C20, 05E10.}
\blfootnote{\textit{Key words and phrases}. Unit Group, Group Ring, Prime Graph Question, Sporadic simple groups.}

\section{Introduction}

The unit group of the integral group ring $\mathbb{Z}G$ of a finite group $G$ has given rise to many interesting research questions and results, as recorded e.g. in the monographs \cite{Sehgal1993, JespersDelRioGRG}. Many of these questions concern the connection between finite subgroups of units of $\mathbb{Z}G$ and the group base $G$, e.g. the Isomorphism Problem which was open for over 50 years until answered by Hertweck \cite{HertweckAnnals} or the recently answered Zassenhaus Conjecture \cite{EiseleMargolis17} which was open for over 40 years. For both questions there turned out to be counterexamples. In this light the weaker versions of the former Zassenhaus Conjecture, which are still open, become more important. One of these versions, the so-called Prime Graph Question, gained attention after being introduced by Kimmerle in \cite{KimmiPQ}. To formulate it denote by $\mathrm{V}(\mathbb{Z}G)$ the group of the so-called \emph{normalized units} in $\mathbb{Z}G$, i.e. the units whose coefficients sum up to $1$.\\

\textbf{Prime Graph Question:} Let $G$ be a finite group and $p$ and $q$ some primes. If $\mathrm{V}(\mathbb{Z}G)$ contains an element of order $pq$, does $G$ contain an element of order $pq$?\\

The \emph{prime graph} $\Gamma(G)$ of a group $G$ is a graph whose vertices are the primes appearing as orders of elements in $G$ and two vertices $p$ and $q$ are connected if and only if $G$ contains an element of order $pq$. Hence the Prime Graph Question can also be formulated as: For $G$ a finite group, does $\Gamma(G) = \Gamma(\mathrm{V}(\mathbb{Z}G))$ hold? 

The Prime Graph Question seems particularly approachable, since, in contrast to other questions on the finite subgroups in $\mathrm{V}(\mathbb{Z}G)$, a reduction result is available here: The Prime Graph Question holds for a group $G$, if it holds for all almost simple images of $G$ \cite[Theorem 2.1]{KimmiKonovalov17}. Recall that a group $A$ is called \emph{almost simple} if it is isomorphic to a subgroup of the automorphism group of a non-abelian simple group $S$ containing the inner automorphisms of $S$, i.e. $S \cong {\rm{Inn}}(S) \leq A \leq {\rm{Aut}}(S)$. In this case $S$ is called the socle of $A$.

It is known that the Prime Graph Question has a positive answer for almost simple groups with socle isomorphic to a projective special linear group $\operatorname{PSL}(2,p)$ or $\operatorname{PSL}(2,p^2)$ for $p$ a prime \cite[Theorem A]{4primaryI} or an alternating group of degree up to $17$ \cite{BaechleCaicedoSymmetric}. The question also has a positive answer for groups whose order is divisible by exactly three pairwise different primes \cite{KimmiKonovalov17, Gitter} and many almost simple groups whose order is divisible by four pairwise different primes \cite{4primaryII}.

Employing a computer implementation of a method introduced by Luthar and Passi \cite{LutharPassi1989} and Hertweck \cite{HertweckBrauer}, nowadays known as the HeLP method, in a series of papers between 2007 and 2012 Bovdi, Konovalov and different collaborators of them studied the Prime Graph Question for sporadic simple groups \cite{BovdiKonovalovM12, BovdiKonovalovM11, BovdiKonovalovMcL, BovdiKonovalovM22, BovdiKonovalovSuz, BovdiKonovalovM23, BovdiKonovalovONan, BovdiKonovalovRu, BovdiKonovalovHS, BovdiKonovalovJ, BovdiKonovalovConway, BovdiKonovalovM24}. Overall they studied 17 sporadic simple groups and were able to prove the Prime Graph Question for 13 of these groups, an overview of their results can be found in \cite[Section 5]{KimmiKonovalov15}. Also in \cite[Corollary 5.4]{KimmiKonovalov15} it was recorded that the Prime Graph Question holds for almost simple groups with a socle isomorphic to one of the $13$ sporadic simple groups for which Bovdi, Konovalov et al. proved the Prime Graph Question. The four groups studied by Bovdi, Konovalov et al. for which they were not able to prove the Prime Graph Question were the O'Nan simple group \cite{BovdiKonovalovONan} and the three Conway simple groups \cite{BovdiKonovalovConway}. Developing further the combinatorial side of a method introduced in \cite{Gitter} in this note we obtain a positive answer for the Prime Graph Question for the latter groups. This is the first contribution to the Prime Graph Question for sporadic simple groups since the papers of Bovdi, Konovalov et al.

\begin{theorem}\label{TheoremConway}
The Prime Graph Question has a positive answer for the sporadic Conway simple groups $Co_3$, $Co_2$ and $Co_1$. 
\end{theorem}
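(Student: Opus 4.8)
The plan is to push a hypothetical unit $u\in\V(\ZZ G)$ of order $pq$ down to a rigid piece of linear algebra and then show that the resulting configuration cannot occur, translating it into a finite question about Young tableaux and Littlewood--Richardson coefficients as promised in the abstract. First recall that, by the work of Bovdi, Konovalov et al.\ combined with the HeLP method applied to the ordinary and $p$-modular character tables of $G\in\{Co_3,Co_2,Co_1\}$, the Prime Graph Question for these groups is already settled except for finitely many pairs of primes $(p,q)$ with no element of order $pq$ in $G$, and for each such pair only a short explicit list of partial augmentation vectors of a would-be unit $u$ of order $pq$ survives. Fix such a pair $(p,q)$ and such a vector. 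Since $G$ has no element of order $pq$, the unit $u$ and its powers $u^{p}$ (of order $q$) and $u^{q}$ (of order $p$) have non-zero partial augmentation only on conjugacy classes of elements of order $1$, $p$ or $q$, the coefficient of the identity vanishing by the Berman--Higman theorem; in particular, up to the finitely many HeLP-admissible options, the partial augmentations of $u^{p}$ and $u^{q}$ are known, and hence so are the eigenvalue multisets of $u^{p}$ and $u^{q}$ on every ordinary representation of $G$.

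The additional input, following and developing \cite{Gitter}, comes from lattice theory. Each Conway group has a small faithful $\ZZ G$-lattice coming from the Leech lattice $\Lambda$: the $23$-dimensional lattices orthogonal to a fixed type-$2$, resp.\ type-$3$, vector for $Co_2$ and $Co_3$, and the $276$-dimensional exterior square $\wedge^{2}\Lambda$ for $Co_1$. Call this module $W$. Restricting $W$ to $\langle u\rangle\cong C_p\times C_q$ and passing to $\ZZ_p$, the fact that $p\nmid q$ makes $\ZZ_p C_q$ a product of unramified discrete valuation rings $\prod_i R_i$, so $\ZZ_p\langle u\rangle\cong\prod_i R_iC_p$, and each $R_iC_p$ has exactly three indecomposable lattices, of $\ZZ_p$-ranks proportional to $1$, $p-1$ and $p$. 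Bookkeeping the induced decomposition of $W$ forces the multiset of eigenvalues of $u$ on $W\otimes\mathbb{C}$ into a rigid shape governed by only four non-negative integers: the multiplicity of the eigenvalue $1$, a common multiplicity for each non-trivial $p$-th root of unity, one for each non-trivial $q$-th root, and one for each product of two such. Carrying out the symmetric argument at $q$, and combining it with the known eigenvalue data of $u^{p}$ and $u^{q}$ and with Weiss' theorem — which makes $\langle u^{q}\rangle$ conjugate in $\ZZ_p G$ to a genuine subgroup of order $p$ of $G$ and thereby pins down the $\langle u^{q}\rangle$-module structure of $W/pW$ — reduces these four parameters to a bounded list of candidates.

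It remains to propagate this rigidity through the representation ring of $G$ (HeLP by itself imposes no link between the eigenvalue data on distinct irreducibles beyond what the shared partial augmentations force). For a partition $\lambda$ the unit $u$ acts on the Schur functor $\mathbb{S}^{\lambda}W$ with eigenvalues read off from the Schur polynomial $s_{\lambda}$ evaluated at the four-valued eigenvalue data of $u$ on $W$, and $\mathbb{S}^{\lambda}W$, restricted along $G\hookrightarrow\mathrm{GL}(W)$, decomposes into irreducible $G$-modules with multiplicities controlled by Littlewood--Richardson coefficients together with the decomposition of plethysms of the Leech lattice into $G$-irreducibles. Substituting the four-parameter data into these Schur polynomials and splitting into $G$-irreducibles turns the HeLP integrality and non-negativity requirements for all the modules $\mathbb{S}^{\lambda}W$ into a finite system of inequalities indexed by Young tableaux and Littlewood--Richardson coefficients; the statement to be verified is that this system has no solution for any residual $(p,q)$ and any surviving partial augmentation vector, a finite combinatorial check that produces the required contradiction.

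I expect the main obstacle to be this final step. One has to establish a provably correct dictionary between the exterior powers, symmetric powers, and more general Schur functors of the Leech module and the irreducible characters of $Co_i$ — that is, carry out the Littlewood--Richardson and plethysm bookkeeping explicitly — and one must arrange for the resulting combinatorial search to stay feasible; this is genuinely large for $Co_1$, where $W$ is $276$-dimensional and a substantial family of partitions is needed before the admissible four-tuples are all excluded.
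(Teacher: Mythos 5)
There is a genuine gap, and it sits exactly where you predict the ``main obstacle'' to be, but it is worse than a feasibility problem: the final step of your plan cannot produce a contradiction at all. The eigenvalue multiplicities of $u$ on \emph{any} ordinary representation of $G$ are already determined by the partial augmentations of $u$, $u^p$, $u^q$ (this is the content of the HeLP formulas, cf.\ Lemma~\ref{LemCharVals}), and the character of a Schur functor $\mathbb{S}^{\lambda}W$ is an ordinary character, i.e.\ a non-negative integral combination of irreducibles. Hence the ``integrality and non-negativity requirements for all the modules $\mathbb{S}^{\lambda}W$'' are automatic consequences of the HeLP constraints on the irreducible characters, and the surviving partial augmentation vectors satisfy those by construction — this is precisely why Bovdi--Konovalov could not finish $Co_1$, $Co_2$, $Co_3$ with HeLP alone. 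Your plethysm/Littlewood--Richardson bookkeeping therefore re-derives constraints that are already exhausted, no matter how many partitions $\lambda$ you process. What is missing is a mechanism that links the \emph{modular} ($k\langle u\rangle$-module) structure of different ordinary representations beyond their character values. In the paper this linkage comes from a $p$-block with a line-shaped Brauer tree: the mod-$p$ reductions of the ordinary lattices in the block share simple $kG$-composition factors, and comparing the possible $k\langle u\rangle$-structures of a common composition factor computed from two different lattices (via Theorem~\ref{ThLRCoeff}, Proposition~\ref{PropChar0} and the combinatorial Proposition~\ref{MinMax}) yields the inequality of Theorem~\ref{MainProp}, which the HeLP-surviving partial augmentations then violate numerically. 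Your proposal never introduces such a cross-representation constraint.

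Two further points. First, the appeal to ``Weiss' theorem'' is not available in the generality you need: Weiss' rigidity results concern $p$-adic permutation lattices and integral representations of $p$-groups, and there is no theorem making the cyclic group $\langle u^q\rangle$ of order $p$ generated by a torsion unit of $\mathrm{V}(\mathbb{Z}G)$ conjugate inside $\mathbb{Z}_pG$ to a subgroup of $G$ for an arbitrary finite $G$ (such a statement would settle far stronger conjugacy questions for $p$-elements than are known). Second, even if the $\langle u\rangle$-lattice structure of your chosen $W$ were pinned down exactly, a contradiction requires comparing it against the structure forced by \emph{other} $G$-modules sharing $p$-modular composition factors with $W$, which effectively means using decomposition numbers or Brauer trees — the very input your route avoids. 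So the proposal agrees with the paper only in its first step (HeLP reduction to finitely many partial augmentation vectors for orders $35$, respectively $55$ and $65$); the decisive second step needs to be replaced by an argument of the type of Theorem~\ref{MainProp}.
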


Note that the outer automorphism group of each Conway simple group is trivial, so Theorem~\ref{TheoremConway} proves the Prime Graph Question also for every almost simple group whose socle is isomorphic to a Conway simple group.

To describe our main tool let us first introduce some notation. Let $G$ be a finite group, $u \in \mathrm{V}(\mathbb{Z}G)$ a unit of order $n$ and $\chi$ an ordinary character of $G$ with corresponding representation $D$. Then we can extend $D$ linearly to $\mathbb{Z}G$ and afterwards restrict it to $\mathrm{V}(\mathbb{Z}G)$, obtaining an extension of $D$ and $\chi$. In particular $D(u)$ is then a matrix of finite order dividing $n$. Let $ \xi$ be an $n$-th root of unity. Then we denote by $\mu(\xi, u, \chi)$ the multiplicity of $\xi$ as an eigenvalue of $D(u)$. Moreover for an integer $k$ we will denote by $\zeta_k$ a primitive $k$-th root of unity. Then the results given above will follow from an application of the following theorem. 

\begin{theorem}\label{MainProp}
Let $G$ be a finite group, $p$ an odd prime and $m$ a positive integer not divisible by $p$. Assume that some $p$-block of $G$ is a Brauer Tree Algebra with ordinary characters $\chi_1,$ ...,$\chi_p$ and with the Brauer tree being a line of the form
\[
\begin{tikzpicture}
\node[label=north:{$\chi_1$}] at (0,1.5) (1){};
\node[label=north:{$\chi_{2}$}] at (1.5,1.5) (2){};
\node[label=north:{$\chi_{3}$}] at (3,1.5) (3){};
\node[label=north:{$\chi_{p-2}$}] at (4.5,1.5) (4){};
\node[label=north:{$\chi_{p-1}$}] at (6,1.5) (5){};
\node[label=north:{$\chi_{p}$}] at (7.5,1.5) (6){};
\foreach \p in {1,2,3,4, 5, 6}{
\draw[fill=white] (\p) circle (.075cm);
}
\draw (.075,1.5)--(1.425,1.5);
\draw (1.575,1.5)--(2.925,1.5);
\draw[dashed] (3.075,1.5)--(4.425,1.5);
\draw (4.575,1.5)--(5.925,1.5);
\draw (6.075,1.5)--(7.425,1.5);
\end{tikzpicture}.
\] 
Moreover assume that in the rings of values of $\chi_1$, ...,$\chi_p$ the prime $p$ is unramified.

Let $u \in \mathrm{V}(\mathbb{Z}G)$ be a unit of order $pm$ and let $\xi$ be some $m$-th root of unity.
Then the inequality
\[\mu(\xi \cdot \zeta_p, u, \chi_{p-1}) - \mu(\xi \cdot \zeta_p, u, \chi_p) \leq \mu(\xi, u, \chi_1) - \sum_{i=1}^{p-2} (-1)^i \mu(\xi \cdot \zeta_p, u, \chi_i) \]
holds.
\end{theorem}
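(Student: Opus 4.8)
The plan is to reinterpret the inequality in terms of how $\langle u\rangle$ acts on the lattices affording the $\chi_j$ after reduction mod $p$, exploiting that a line-shaped Brauer tree makes the reductions of the ordinary lattices successive Heller translates of one another. Note first that the defect group of the block is cyclic of order $p$ (the tree has $p-1$ edges, $p$ vertices, exceptional multiplicity $1$). Write $u = ab = ba$ with $a,b\in\langle u\rangle$ of orders $p$ and $m$, and fix the $m$-th root of unity $\xi$. Let $\mathcal{O}$ be the valuation ring of a finite extension $K$ of $\mathbb{Q}_p$ that is unramified over $\mathbb{Q}_p$, contains $\zeta_m$, and is large enough for $\mathcal{O}G$-lattices $L_j$ affording $\chi_j$ to exist; such $\mathcal{O}$ exists precisely because $p$ is unramified in the rings of values of the $\chi_j$ and $p\nmid m$. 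First I would restrict $L_j$ to $\langle u\rangle$, split off the $\xi$-eigenspace $(L_j)_\xi$ of $b$, and view it as a lattice over $\mathcal{O}[\langle a\rangle]\cong\mathcal{O}[C_p]$. Since $\mathcal{O}$ is unramified over $\mathbb{Z}_p$, the Diederichsen--Reiner classification writes $(L_j)_\xi$ as a direct sum of copies of the trivial lattice $\mathcal{O}$, the augmentation ideal $\mathfrak{a}\cong\mathcal{O}[\zeta_p]$ (of $\mathcal{O}$-rank $p-1$), and the free lattice $\mathcal{O}[C_p]$, with multiplicities $f_j,g_j,h_j$. Extending scalars and reading off the eigenvalues of $a$, and using $\gcd(m,p)=1$ so that an eigenvalue $\xi\zeta_p^s$ of $D_j(u)$ can only come from $(L_j)_\xi$, gives $\mu(\xi,u,\chi_j)=f_j+h_j=:M_j$ and $\mu(\xi\zeta_p,u,\chi_j)=g_j+h_j=:N_j$; reduction mod the maximal ideal identifies $(L_j)_\xi$ with $J_1^{f_j}\oplus J_{p-1}^{g_j}\oplus J_p^{h_j}$ over $\mathbb{F}[y]/(y^p)$, $y=a-1$, where $J_r=\mathbb{F}[y]/(y^r)$ and $\mathbb{F}$ is the residue field.

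The crucial input is the Brauer tree. Because it is the line $\chi_1-S_1-\chi_2-\cdots-S_{p-1}-\chi_p$, the standard structure theory of blocks with cyclic defect group provides, for each edge $S_k$, a short exact sequence of $\mathcal{O}G$-lattices $0\to L_{k+1}\to P_{S_k}\to L_k\to 0$ (up to relabeling the two ends), with $P_{S_k}$ the projective cover of $S_k$; here one uses the uniserial lattices at the vertices. Restricting this to $\langle u\rangle$ and passing to $\xi$-eigenspaces of $b$ is exact (as $b$ is a $p'$-element), the middle term becoming a free $\mathcal{O}[C_p]$-lattice; reduction mod $p$ remains exact because the outer terms are $\mathcal{O}$-free. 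This realizes $(L_{k+1})_\xi\bmod p$ as a (possibly non-minimal) first syzygy of $(L_k)_\xi\bmod p$ over $\mathbb{F}[C_p]$, hence equal to $\Omega$ of it up to free summands. Using $\Omega J_1=J_{p-1}$, $\Omega J_{p-1}=J_1$, $\Omega J_p=0$, this forces $f_{k+1}=g_k$ and $g_{k+1}=f_k$, so $M_k-N_k=f_k-g_k=(-1)^{k-1}(M_1-N_1)$ for all $k$.

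The last ingredient is a dimension count on $\langle b\rangle$. Since $b$ has order prime to $p$, the decomposition-matrix identity $\chi_j(b^\ell)=\sum_k d_{jk}\varphi_k(b^\ell)$ holds, and for the line tree $d_{jk}=1$ exactly when $k\in\{j-1,j\}$; comparing $\xi$-isotypic multiplicities gives $M_j+(p-1)N_j=\sigma_{j-1}+\sigma_j$, where $\sigma_k\ge 0$ is the multiplicity of $\xi$ in $\varphi_k|_{\langle b\rangle}$, $\sigma_0=\sigma_p=0$, and the left side is just the rank of $(L_j)_\xi$. Summing with alternating signs, $\sum_{j=1}^{p}(-1)^j\bigl(M_j+(p-1)N_j\bigr)=0$. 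Feeding $N_j=M_j+(-1)^j(M_1-N_1)$ into this and solving the resulting one-variable linear relation yields $\sum_{j=1}^{p}(-1)^j N_j=M_1-N_1\le M_1$. Expanding the left-hand side and using that $p$ is odd rewrites this as exactly the asserted inequality, since then $N_{p-1}-N_p=\sum_{j=1}^{p}(-1)^j N_j-\sum_{j=1}^{p-2}(-1)^j N_j$.

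The hard part will be the bookkeeping across the three levels --- the block over $\mathcal{O}G$ where the Brauer-tree combinatorics resides, its restriction to the cyclic group $\langle u\rangle$ generated by the unit, and the reduction mod $p$ --- keeping exactness and the three lattice types under control at each step; this is the only place where the specific hypotheses (unramifiedness, and the torsion-freeness of lattices) are used. A subsidiary point is to check that the short exact sequences attached to consecutive edges of the line can be oriented coherently; but since $\Omega$ is an involution on the stable module category of $\mathbb{F}[C_p]$, the sign pattern $M_k-N_k=(-1)^{k-1}(M_1-N_1)$ follows automatically once the $\Omega$-relation between consecutive reductions $(L_k)_\xi\bmod p$ is established.
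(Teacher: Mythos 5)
There is a genuine gap, and it sits at the single step on which your whole argument rests: the assertion that, after restricting the sequence $0\to L_{k+1}\to P_{S_k}\to L_k\to 0$ to $\langle u\rangle$ and passing to the $\xi$-eigenspace of $b$, ``the middle term becomes a free $\mathcal{O}[C_p]$-lattice''. The group $\langle u\rangle$ is a subgroup of $\mathrm{V}(\mathbb{Z}G)$, not of $G$, so you are restricting along the algebra homomorphism $\mathcal{O}\langle u\rangle\to\mathcal{O}G$ induced by the unit, and such a restriction preserves projectivity only if $\mathcal{O}G$ itself is projective as an $\mathcal{O}\langle u\rangle$-module. For a genuine group element this is Lagrange; for a torsion unit it is not known, and the only available control is character-theoretic: Berman--Higman/Hertweck give that $K\otimes P_{S_k}$ is free over $K\langle u\rangle$, but on the level of $\mathcal{O}[C_p]$-lattices a character which is a multiple of the regular one cannot distinguish a free summand $\mathcal{O}[C_p]$ from $\mathcal{O}\oplus\mathfrak{a}$. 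This indeterminacy is exactly the undetermined parameter $a$ in Proposition~\ref{PropChar0}, and it is the reason the paper never assumes anything about restrictions of projective (or simple) modules. Without freeness of the middle term you do not get $\overline{(L_{k+1})_\xi}\cong\Omega\bigl(\overline{(L_k)_\xi}\bigr)\oplus(\text{free})$, so the relations $f_{k+1}=g_k$, $g_{k+1}=f_k$ and hence the exact alternation $M_k-N_k=(-1)^{k-1}(M_1-N_1)$ are unsupported. Note also that this alternation is a system of \emph{equalities}, far stronger than the inequality of Theorem~\ref{MainProp}; if it were available, essentially all of Section 3 of the paper would be superfluous, which is a good heuristic warning sign.

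For what it is worth, the other ingredients of your sketch are sound: the short exact sequences $0\to L_{k+1}\to P_{S_k}\to L_k\to 0$ over $\mathcal{O}G$ do exist (the decomposition numbers of a line tree give each projective indecomposable exactly the two ordinary constituents $\chi_k,\chi_{k+1}$); the rank identity $M_j+(p-1)N_j=\sigma_{j-1}+\sigma_j$ is legitimate, using Hertweck's result on Brauer character values at the $p$-regular unit $b$; and the coherence issue you raise about choices of lattices at the vertices is indeed harmless, since $f_j-g_j=M_j-N_j$ depends only on $\chi_j$ and $u$. But the paper's route is different precisely because the projectivity statement is unavailable: it only reduces the vertex lattices via Proposition~\ref{PropChar0}, uses the Brauer tree solely to know the $kG$-composition factors $S_{i-1}',S_i'$ of $\overline{M_i'}$, translates the resulting submodule/quotient relations over $k\langle u\rangle$ into nonvanishing Littlewood--Richardson coefficients (Theorem~\ref{ThLRCoeff}), and then extracts one-sided bounds on the numbers $\gamma_i(S_r)$ from the tableau combinatorics (Proposition~\ref{MinMax}), which is why the conclusion is an inequality and not your equality. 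To rescue your approach you would have to prove that $\mathcal{O}G$ (equivalently $kG$) is projective over $\mathcal{O}\langle u\rangle$ for the torsion unit $u$, which is a substantial open-ended claim, not a bookkeeping point.
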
 
This theorem might be regarded as a generalization of \cite[Proposition 3.2]{4primaryII}. The condition on the unit can also be formulated in terms of character values which makes the inequality easier to check by hand, although it becomes much longer then, cf. Lemma~\ref{LemCharVals}.

Theorem~\ref{MainProp} will be achieved using the so-called lattice method introduced in \cite{Gitter} and developed further in \cite{4primaryII}. It is typical for the study of group rings that different fields of mathematics, such as group theory, ring theory, representation theory and number theory are combined to achieve results. In the present paper the main tool is combinatorics, more precisely calculations with Young tableaus and Littlewood-Richardson coefficients, cf. the proof of Theorem~\ref{MainProp}. It is quite plausible that Theorem~\ref{MainProp}, which is the main tool to prove Theorem~\ref{TheoremConway}, can be applied also to study the Prime Graph Question or related questions for many other groups. Also the methods presented here could be used to prove variations of Theorem~\ref{MainProp}, in particular for other forms of Brauer trees. 

We will also need to use the HeLP method to obtain enough restrictions on torsion units which allow the application of Theorem~\ref{MainProp}. We will start by recalling the needed methods in Section 2, develop the necessary combinatorics in Section 3, continue to prove Theorem~\ref{MainProp} and finally show how this can be applied to prove Theorem~\ref{TheoremConway} in Section 4.

\section{Preliminaries and methods}
A fundamental notion when studying torsion units of integral group rings are so-called partial augmentations. Let $u \in \mathbb{Z}G$ be an element of the form $\sum_{g \in G}z_g g$ and denote by $x^G$ the conjugacy class of an element $x$ in $G$. Then $\varepsilon_{x^G}(u) = \sum_{g \in x^G} z_g$ is called the \emph{partial augmentation} of $u$ at $x$. A fundamental theorem of Hertweck \cite[Theorem 2.3]{HertweckBrauer} states that if $u \in \mathrm{V}(\mathbb{Z}G)$ is a unit of order $n$ then $\varepsilon_{x^G}(u) \neq 0$ implies that the order of $x$ divides $n$. This implies in particular that the exponents of $G$ and $\mathrm{V}(\mathbb{Z}G)$ coincide, a result due originally to Cohn and Livingstone \cite{CohnLivingstone}. Moreover $\varepsilon_1(u) = 0$ unless $u = 1$ by the Berman-Higman Theorem \cite[Proposition 1.5.1]{JespersDelRioGRG}.

\subsection{The HeLP method}

Partial augmentations are class functions and as such can be investigated using representation theory. Note that if $\chi$ is an ordinary character of $G$ and $u \in \mathrm{V}(\mathbb{Z}G)$ a torsion unit of order $n$ then $\chi(u) = \sum_{x^G} \varepsilon_{x^G}(u)\chi(g)$, where the sum runs over the conjugacy classes of $G$. It was shown by Hertweck \cite[Theorem 3.2]{HertweckBrauer} that this also holds for $p$-Brauer characters, if $p$ is not a divisor of $n$ and the sum is  understood to run on $p$-regular conjugacy classes of $G$. On the other hand if we are given the partial augmentations of $u$ and its powers we can compute the eigenvalues, including multiplicities, of $u$ under any ordinary or $p$-modular representation of $G$, again assuming $p$ is not dividing $n$. These multiplicities can be expressed in explicit formulas, depending on the partial augmentations of $u$ and its powers, and that the results of these formulas are non-negative integers is the basic idea of the HeLP method, cf. \cite[Section 2]{4primaryI} for a detailed explanation and \cite{HertweckBrauer} for the proofs.

Hence there is an algorithmic method which allows to obtain restrictions on the possible partial augmentations of torsion units in $\mathrm{V}(\mathbb{Z}G)$, provided we have some knowledge on the characters of $G$. If we apply the HeLP method to the whole character table of $G$ then, for any fixed $n$, we will obtain a finite number of possibilities for the partial augmentations of units of order $n$ in $\mathrm{V}(\mathbb{Z}G)$. If it happens that we obtain no possibility for units of some given order $n$, then we know that there exist no units at all in $\mathrm{V}(\mathbb{Z}G)$ of order $n$. This is the basic idea in the application of the HeLP method for the study of the Prime Graph Question and in this way Bovdi, Konovalov et al. gave their proofs for 13 sporadic groups. We will also rely on the HeLP method to produce a finite number of possibilities for the partial augmentations of units of certain orders and then Theorem~\ref{MainProp} will do the rest. We will apply here a computer implementation of the HeLP method as a package \cite{HeLPPackage} in the computer algebra system \texttt{GAP} \cite{GAP}. This package can also be used to reproduce the results of Bovdi, Konovalov et al.

\subsection{The lattice method}\label{sec:LatMet}

The lattice method is an other method which makes use of the multiplicities of eigenvalues of a torsion unit $u \in \mathrm{V}(\mathbb{Z}G)$ of order $n$ under ordinary representations of $G$. This method was introduced in \cite{Gitter} and the basic idea consists in obtaining restrictions on the simple $kG$-modules when viewed as $k\langle u \rangle$-modules, where $k$ is a field of characteristic $p$ for a divisor $p$ of $n$. These simple $kG$-modules are the $p$-modular composition factors of ordinary representations of $G$ and studying different ordinary representations with common composition factors can finally produce a contradiction to the existence of $u$.

We will only recall those parts of the lattice method necessary for our means in this article. We will cite the articles \cite{Gitter, 4primaryII} where this method is explained, but many facts can be found in text books on representation theory. First of all let $C = \langle c \rangle$ be a cyclic group of order $pm$ where $p$ is a prime and $m$ an integer not divisible by $p$. Let $k$ be a field of characteristic $p$ containing a primitive $m$-th root of unity $\xi$. Then there are, up to isomorphism, exactly $pm$ indecomposable $kC$-modules determined by a pair $(i,j)$ where $1 \leq i \leq p$, $1 \leq j \leq m$ such that a module $M$ corresponding to $(i,j)$ is $i$-dimensional as a $k$-vector space and $c^p$ acts as $\xi^j$ on $M$. We denote this module by $I_i^j$ and if the action of $c^p$ is clear from the context or if $m = 1$ we simply write $I_i$. The module $I_i^j$ is simple if and only if $i = 1$ and each $I_i^j$ is uniserial \cite[Proposition 2.2]{Gitter}.

Now assume that $M$ is a $kC$-module, $d$-dimensional as $k$-vector space, such that $c^p$ acts as $\xi^j$ on $M$ for some fixed $j$. Then by the above we know that $M \cong a_pI_p \oplus a_{p-1}I_{p-1} \oplus \dots \oplus a_1I_1$ for certain non-negative integers $a_1, \ ..., \ a_p$. Moreover $a_pp + a_{p-1}(p-1) + ... + a_1 = d$ and hence 
\[\lambda = (\underbrace{p,...,p}_{a_p}, \underbrace{p-1,...,p-1}_{a_{p-1}}, \dots, \underbrace{1,...,1}_{a_1}) \]
is a partition of $d$. We call $\lambda$ the \emph{partition corresponding to $M$}.

A common framework to work with partitions using combinatorics is provided by considering Young tableaux and several notions connected with these tableaux. 
We recall here quickly those of them we will need in this paper. They all can be found in several textbooks on algebraic combinatoric as e.g. \cite{Fulton}. A \emph{Young diagram} corresponding to a partition $\lambda = (\lambda_1, \ ..., \ \lambda_r)$ is a diagram consisting of boxes ordered in rows and columns such that the first row contains $\lambda_1$ boxes, the second row $\lambda_2$ boxes etc. A \emph{skew diagram} is obtained by removing a smaller Young diagram from a larger one that contains it. I.e. if $\mu = (\mu_1,\ ..., \ \mu_r) \leq \lambda = (\lambda_1, \ ..., \ \lambda_r)$, meaning that $\mu_i \leq \lambda_i$ for all $i$, then the skew diagram corresponding to $\lambda/\mu$ is obtained by erasing from the Young diagram corresponding to $\lambda$, always from left to right, $\mu_1$ boxes in the first row, $\mu_2$ boxes in the second row etc. If we fill a skew diagram with entries from an alphabet (in our case the alphabet always will be the positive integers) a skew diagram becomes a \emph{skew tableau} $T$. By "filling" we mean writing a letter in every box. $T$ is called \emph{semistandard} if reading a row from left to right the entries do not decrease and reading a column from top to bottom the entries strictly increase. 

For a box $b$ in a skew tableau $T$ we denote by $w(b)$ the word which we obtain reading $T$ from top to bottom and from right to left until $b$, including the entry in $b$. We denote by $w(T)$ the word which we obtain reading all boxes of $T$ in this way, i.e. if $b$ is the lowest box in the first (i.e. most left) column of $T$ then $w(b) = w(T)$. We say that $T$ satisfies the \emph{lattice property} if for every box $b$ in $T$ the word $w(b)$ contains the letter $1$ at least as many times as the letter $2$, the letter $2$ at least as many times as $3$ etc. If the maximal letter of $w(T)$ is $s$ and $w(T)$ contains $\nu_1$ times the letter $1$, $\nu_2$ times the letter $2$ etc. then $\nu = (\nu_1, \ ..., \ \nu_s)$ is called the \emph{content} of $T$. Note that if $T$ is a skew tableau satisfying the lattice property then the content  $\nu = (\nu_1, \ ..., \ \nu_s)$ of $T$ is a partition of $\nu_1 + ... + \nu_s$ as then $\nu_1 \geq \nu_2 \geq ... \geq \nu_s$.
If now $\nu$ is some partition then the \emph{Littlewood-Richardson coefficient} $c^\lambda_{\mu, \nu}$ is the number of semistandard skew tableaux corresponding to $\lambda/\mu$ which satisfy the lattice property and have content $\nu$. A fact of fundamental importance to us is that the Littlewood-Richardson coefficient $c^\lambda_{\mu, \nu}$ is symmetric in $\mu$ and $\nu$.

These combinatorial objects play a role in the study of torsion units of group rings via the following observation \cite[Theorem 2.8]{4primaryII}.

\begin{theorem}\label{ThLRCoeff} Let $C = \langle c \rangle$ be a cyclic group of order $p$, for $p$ a prime, and $k$ a field of characteristic $p$. Let $M, U$ and $Q$ be $kC$-modules with corresponding partitions $\lambda, \mu$ and $\nu$ respectively. Then $M$ contains a submodule $\tilde{U}$ isomorphic to $U$ such that $M/\tilde{U} \cong Q$ if and only if $c^\lambda_{\mu, \nu} \neq 0$.
\end{theorem}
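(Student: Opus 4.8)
\textbf{Proof proposal for Theorem~\ref{ThLRCoeff}.}

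The plan is to translate the module-theoretic statement into the combinatorics of partitions and then invoke the classical Littlewood-Richardson rule in the form that computes $c^\lambda_{\mu,\nu}$ via semistandard skew tableaux with the lattice property. The key dictionary is that over a field $k$ of characteristic $p$ the group algebra $kC$, with $C$ cyclic of order $p$, is isomorphic to $k[x]/(x^p)$ where $x = c - 1$; its indecomposable modules are the Jordan blocks $I_1, \dots, I_p$ with $\dim_k I_i = i$, and a module with partition $\lambda$ is simply the module whose Jordan type (as a nilpotent operator $x$) is the transpose partition $\lambda'$. The first step is therefore to reformulate the claim purely in terms of a nilpotent operator: if $N$ is a nilpotent operator on a $k$-vector space $M$ with $N^p = 0$, of Jordan type $\lambda'$, then $M$ has an $N$-stable subspace of Jordan type $\mu'$ with quotient of Jordan type $\nu'$ if and only if $c^\lambda_{\mu,\nu} \neq 0$. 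This is exactly the content of a theorem of Klein (and, in the language of Hall polynomials, classical work of Hall--Green): the partitions that arise as (submodule, quotient) pairs of a module of type $\lambda'$ over $k[x]/(x^p)$ are precisely those $(\mu',\nu')$ with $c^\lambda_{\mu,\nu}\neq 0$.

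Rather than cite Klein's theorem as a black box, I would give the argument in the two directions. For the ``only if'' direction, suppose $\tilde U \hookrightarrow M \twoheadrightarrow Q$ with the stated types. Choose a basis of $M$ adapted to the filtration $0 \subseteq \tilde U \subseteq M$ and track how the $x$-action (equivalently the $c$-action, since $c = 1 + x$) restricts and descends; the combinatorial heart is to extract from such a pair of nested Jordan structures a semistandard filling of the skew shape $\lambda/\mu$ of content $\nu$ satisfying the lattice property. Concretely, one records for each Jordan string of $M$ where it meets $\tilde U$; the ``excess'' boxes of $\lambda$ over $\mu$ get labelled by which row of $Q$'s Jordan structure they map to, and semistandardness plus the lattice property fall out of the fact that the induced operator on $M/\tilde U$ is again nilpotent with $N^p=0$ and that submodules of uniserial pieces are themselves uniserial (the uniseriality recalled just before the theorem, and used in the form $\dim$-counting on each $I_i$). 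For the ``if'' direction, given a semistandard skew tableau of shape $\lambda/\mu$ and content $\nu$ with the lattice property, one builds $\tilde U \subseteq M$ explicitly: the tableau tells us, string by string, how to embed a model of the type-$\mu'$ module so that the quotient has type $\nu'$, and one checks $x$-stability directly.

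The cleanest route, and the one I would actually write up, replaces the hands-on Jordan-string bookkeeping by a generating-function identity. Over $k[x]/(x^p)$ the relevant structure constants are specializations of Hall polynomials $g^\lambda_{\mu\nu}(t)$ at $t=0$ (since the residue field is ``of size $1$'' for the purposes of counting the possible types, as opposed to counting the submodules), and Hall's theorem identifies $g^\lambda_{\mu\nu}(0)$ with $c^\lambda_{\mu,\nu}$; in particular $g^\lambda_{\mu\nu}(0)\neq 0$ exactly when $c^\lambda_{\mu,\nu}\neq 0$, and this nonvanishing is precisely the existence of a submodule of type $\mu'$ with quotient of type $\nu'$ (independently of the base field, once $k \supseteq \mathbb{F}_p$). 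The main obstacle is bridging the two conventions cleanly: the lattice-method partitions here encode the $c$-action by block sizes $\lambda_i$, whereas the Jordan/Hall-polynomial picture is stated for the transpose, and one must be careful that the symmetry of $c^\lambda_{\mu,\nu}$ in $\mu,\nu$ (recalled in the text) is what makes the transposition harmless, so that the statement as phrased --- ``submodule isomorphic to $U$ with quotient $Q$'' --- is genuinely symmetric and matches $c^\lambda_{\mu,\nu}$ rather than $c^{\lambda'}_{\mu',\nu'}$. Once the conventions are pinned down, both implications are short; the subtle point, and the only place where $\mathrm{char}\,k = p$ together with $|C|=p$ is essential, is that $x^p=0$ forces every part of $\lambda$ (and of $\mu$, $\nu$) to be at most $p$, which is exactly the hypothesis under which the Littlewood-Richardson combinatorics of shapes with columns of height $\le$ (number of Jordan blocks) matches the module theory.
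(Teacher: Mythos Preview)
The paper does not give a proof of this statement: it is quoted as \cite[Theorem~2.8]{4primaryII} and used as an imported tool. So there is no ``paper's own proof'' to compare your proposal against.

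Your approach is the standard one, going back to Klein and to the theory of Hall polynomials (Macdonald, \emph{Symmetric Functions and Hall Polynomials}, Chapter~II): over $k[x]/(x^p)$ (which is how $kC$ looks once you set $x=c-1$), the existence of a short exact sequence with prescribed types $\mu,\lambda,\nu$ is equivalent to the nonvanishing of the Hall polynomial $g^{\lambda}_{\mu\nu}$, which in turn is equivalent to $c^{\lambda}_{\mu\nu}\neq 0$ since $c^{\lambda}_{\mu\nu}$ is the \emph{leading} coefficient of $g^{\lambda}_{\mu\nu}$. This is almost certainly what \cite{4primaryII} does or cites as well.

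Two small corrections. First, your transpose bookkeeping is off: with the paper's convention the parts of $\lambda$ \emph{are} the Jordan block sizes of $x=c-1$, so no transpose is needed. The identity $c^{\lambda}_{\mu\nu}=c^{\lambda'}_{\mu'\nu'}$ (via the $\omega$ involution) would make a transpose harmless for nonvanishing, but the symmetry in $\mu$ and $\nu$ that the paper recalls is a different fact and would not by itself repair a transpose mismatch. Second, the specialization $g^{\lambda}_{\mu\nu}(0)=c^{\lambda}_{\mu\nu}$ is not correct as stated; the LR coefficient is the leading, not the constant, coefficient. What you actually need --- and what is true --- is that $g^{\lambda}_{\mu\nu}$ is nonzero as a polynomial iff $c^{\lambda}_{\mu\nu}\neq 0$, and that the existence of the desired submodule does not depend on the particular field $k$ of characteristic $p$. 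Neither point breaks the argument, but both should be straightened out in a written version.
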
 

To pass from multiplicities of eigenvalues under ordinary representations to modules over modular group algebras we will employ the following \cite[Propositions 2.3, 2.4]{Gitter}.

\begin{proposition}\label{PropChar0} Let $C = \langle c \rangle$ be a cyclic group of order $pm$ such that $p$ does not divide $m$. Let $R$ be a complete local ring of characteristic $0$ containing a primitive $m$-th root of unity $\xi$ such that $p$ is contained in the maximal ideal of $R$ and not ramified in $R$. Denote by $k$ the residue class field of $R$ and adopt the bar-notation for reduction modulo the maximal ideal of $R$.

Let $D$ be a representation of $C$ such that the eigenvalues of $D(u)$ in the algebraic closure of the quotient field of $R$, with multiplicities, are $\xi A_1, \xi^2 A_2, \dots, \xi^{m} A_{m}$ for certain multisets $A_i$ consisting of $p$-th roots of unity. Here also $A_i = \emptyset$ is possible. Let $\zeta$ be a non-trivial $p$-th root of unity. Note that since the sum of the eigenvalues of $D(u)$ is an element in $R$ we know for every $i$ that if $A_i$ contains $\zeta$ exactly $r$ times then $A_i$ contains also $\zeta^2$, ...,$\zeta^{p-1}$ exactly $r$ times.

Let $M$ be an $RC$-lattice affording the representation $D$. Then 
\[M \cong M_1 \oplus M_2 \oplus ... \oplus M_m \]
such that for every $i$ we have: If $A_i$ contains $\zeta$ exactly $r$ times and $1$ exactly $s$ times then
\[\overline{M_i} \cong aI^i_p \oplus (r-a)I^i_{p-1} \oplus (s-a)I^i_1 \]
for some non-negative integer $a \leq {\rm{min}}\{r,s\}$.
\end{proposition}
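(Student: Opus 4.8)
The plan is to split off the prime-to-$p$ part of $C$, reducing everything to the case $m=1$, and then to read off the conclusion from the classification of $R$-lattices over a cyclic group of order $p$ together with the Krull--Schmidt theorem.

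Since $\gcd(p,m)=1$ we have $C=\langle c^p\rangle\times\langle c^m\rangle$ with $\langle c^p\rangle$ of order $m$. As $m$ is a unit in $R$ and $R$ contains the $m$-th roots of unity, the commutative algebra $R\langle c^p\rangle$ is a product of $m$ copies of $R$ via the idempotents $e_i=\tfrac1m\sum_{j=0}^{m-1}\xi^{-ij}(c^p)^j$, which are central in $RC$. Hence $M=\bigoplus_{i=1}^m M_i$ with $M_i=e_iM$ an $RC$-sublattice on which $c^p$ acts as the scalar $\xi^i$; reduction modulo the maximal ideal respects this splitting, and the $M_i$ correspond, up to reindexing, to the multisets $A_i$, so it suffices to analyse one $M_i$. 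On $M_i$ I would replace $c$ by $\gamma=\eta_i^{-1}c$, where $\eta_i\in R$ is the unique $m$-th root of unity with $\eta_i^p=\xi^i$ (it exists because raising to the $p$-th power permutes the $m$-th roots of unity). Then $\gamma^p=1$ on $M_i$, so $M_i$ becomes a lattice over the group algebra $R\langle\gamma\rangle$ of a cyclic group of order $p$, the eigenvalues of $c$ on $M_i$ are $\eta_i$ times those of $\gamma$, and since $c^p=\xi^i$ the indecomposable constituents of $\overline{M_i}$ over $k\langle\gamma\rangle$ are precisely modules of the form $I^i_j$.

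Next, using that $p$ is unramified in $R$, I would invoke the classical fact that $R\langle\gamma\rangle$ has exactly three indecomposable lattices up to isomorphism: the trivial lattice $R$, the rank-$(p-1)$ lattice $R[\zeta_p]=R[x]/\Phi_p(x)$, and the free lattice $R\langle\gamma\rangle$. Their invariants are immediate: $\gamma-1$ reduces modulo the maximal ideal to a nilpotent operator of index $1$, $p-1$ and $p$, so the reductions are $I_1$, $I_{p-1}$ and $I_p$; and the eigenvalue multisets of $\gamma$ are $\{1\}$, $\{\zeta_p,\dots,\zeta_p^{p-1}\}$ and $\{1,\zeta_p,\dots,\zeta_p^{p-1}\}$. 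By Krull--Schmidt, $M_i\cong (R\langle\gamma\rangle)^a\oplus R[\zeta_p]^b\oplus R^{c'}$ with $a,b,c'\ge 0$. Comparing eigenvalue multiplicities with the hypothesis, every non-trivial $p$-th root of unity occurs $a+b=r$ times and $1$ occurs $a+c'=s$ times; hence $b=r-a\ge0$ and $c'=s-a\ge0$, forcing $0\le a\le\min\{r,s\}$. Reducing modulo the maximal ideal then gives $\overline{M_i}\cong aI^i_p\oplus(r-a)I^i_{p-1}\oplus(s-a)I^i_1$, and reassembling over $i$ finishes the proof.

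The crux is the three-lattice classification for $R\langle\gamma\rangle$ and the fact that it persists over any complete local ring in which $p$ is unramified, not just over $\mathbb{Z}_p$; this is precisely where the unramifiedness hypothesis enters, since for ramified $R$ there are further indecomposable lattices and the statement would break. I would obtain it from the conductor (Milnor) square relating $R\langle\gamma\rangle$, $R$, $R[\zeta_p]$ and their common residue field $k$: a lattice over $R\langle\gamma\rangle$ is built by gluing a lattice over $R$ to one over $R[\zeta_p]$ along an isomorphism of their reductions to $k$, and since the complete discrete valuation rings $R$ and $R[\zeta_p]$ each carry a single indecomposable lattice, the patching yields exactly the three modules above. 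The remaining ingredients --- the reduction to $m=1$, the computation of the three reductions, and the eigenvalue bookkeeping --- are short and routine.
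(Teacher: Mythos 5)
The paper itself contains no proof of Proposition~\ref{PropChar0}: it is quoted verbatim from \cite[Propositions 2.3, 2.4]{Gitter}, so there is no in-paper argument to compare yours against line by line. That said, your proposal is correct and follows exactly the route one would expect (and, in essence, the one taken in the cited source): decompose $M$ by the central idempotents of $R\langle c^p\rangle$, which is legitimate because $m$ is a unit in $R$ (it lies outside the maximal ideal since $p\in\mathfrak{m}$ and $\gcd(m,p)=1$) and $R$ contains the $m$-th roots of unity; twist the generator on each summand by the unique $m$-th root of unity $\eta_i$ with $\eta_i^p=\xi^i$ to reduce to lattices over $RC_p$; invoke the classification of $RC_p$-lattices over a complete discrete valuation ring in which $p$ is unramified (only $R$, $R[\zeta_p]$ and $RC_p$ occur); and conclude by Krull--Schmidt together with the eigenvalue bookkeeping $a+b=r$, $a+c'=s$. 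Your identification of the three reductions modulo $\mathfrak{m}=(p)$ as $I_1$, $I_{p-1}$, $I_p$ (via $R[\zeta_p]/pR[\zeta_p]\cong k[\lambda]/(\lambda^{p-1})$ with $\lambda=\zeta_p-1$) is also correct, and the reindexing you mention (the summand on which $c^p$ acts as $\xi^{ip}$ carries the eigenvalues $\xi^iA_i$) matches the slightly loose labelling in the statement itself.

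One caveat on the last step: your one-sentence justification of the three-lattice classification via the conductor square is stated too strongly. A $\Lambda$-lattice is not in general obtained by gluing an $R$-lattice to an $R[\zeta_p]$-lattice along an \emph{isomorphism} of their full reductions --- the zero gluing, for example, produces the split lattice $R\oplus R[\zeta_p]$; in general one glues along a partial identification, and proving that only the three listed indecomposables arise requires diagonalizing the gluing matrix over $k$ by lifting automorphisms through both maps to the common residue field, which is precisely where unramifiedness (so that $R/(p)\cong R[\zeta_p]/(\zeta_p-1)\cong k$) enters. Since this classification is classical (Diederichsen, Reiner; see Curtis--Reiner) and you in effect cite it as a known fact, the imprecision in the sketch does not affect the validity of your proof; it would only matter if you intended the sketch to replace the citation.
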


For $u \in \mathrm{V}(\mathbb{Z}G)$ a unit of order $pm$ for which we know the partial augmentations, including those of its powers, these facts allow to derive information on the isomorphism type of simple $kG$-modules, for $k$ a big enough field of characteristic $p$, from the eigenvalues of $u$ under ordinary representations of $G$, see the proof of Theorem~\ref{MainProp}, or \cite{Gitter} for a more detailed sketch of the method.

\subsection{A reformulation using character values}

The multiplicities in Theorem~\ref{MainProp} can be computed from the partial augmentations of $u$ and its proper powers. This can be done by hand or using the \texttt{GAP}-package \texttt{HeLP} \cite{HeLPPaper}, more precisely the command \texttt{HeLP\_MultiplicitiesOfEigenvalues}. The theorem can however also be translated in a condition on character values, so that the condition of Theorem~\ref{MainProp} can be more easily checked without using \texttt{GAP} (or just to look up the character values). This is particularly handy if the characters in question only have integral values and the following lemma provides a way to do this.

\begin{lemma}\label{LemCharVals} Let $G$ be a finite group and $u \in \mathrm{V}(\mathbb{Z}G)$ a unit of order $pq$ where $p$ and $q$ are different primes. Let $\chi$ be a character of $G$ which takes only integral values. Set $\chi(1) = d$, $\chi(u^p) = x$, $\chi(u^q) = y$ and $\chi(u) = z$. Then the following formulas hold.
\begin{align*}
pq \cdot \mu(1, u, \chi) &= d  + (q-1)x + (p-1)y + (p-1)(q-1)z, \\
pq \cdot \mu(\zeta_p, u, \chi) &= d  + (q-1)x  - y - (q-1)z, \\
pq \cdot \mu(\zeta_q, u, \chi) &= d  - x + (p-1)y - (p-1)z, \\
pq \cdot \mu(\zeta_{pq}, u, \chi) &= d  - x - y + z.
\end{align*}
\end{lemma}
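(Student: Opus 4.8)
The plan is to diagonalize $D(u)$, where $D$ is the representation affording $\chi$, and read off the four multiplicities from the structure of its eigenvalues. Since $u$ has order $pq$, every eigenvalue of $D(u)$ is a $pq$-th root of unity, hence of the form $\zeta_p^a \zeta_q^b$ with $0 \le a < p$, $0 \le b < q$. Because $\chi$ is Galois-stable over $\QQ$ (it takes only integral values), the multiset of eigenvalues is invariant under $\Gal(\QQ(\zeta_{pq})/\QQ)$. That Galois group acts transitively on the primitive $pq$-th roots, on the primitive $p$-th roots (the $\zeta_p^a$ with $a\ne 0$), and on the primitive $q$-th roots ($\zeta_q^b$ with $b\ne 0$). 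So if I write $\mu_1 = \mu(1,u,\chi)$, $\mu_p = \mu(\zeta_p,u,\chi)$, $\mu_q = \mu(\zeta_q,u,\chi)$ and $\mu_{pq} = \mu(\zeta_{pq},u,\chi)$, then the eigenvalue $\zeta_p^a\zeta_q^b$ occurs with multiplicity $\mu_1$, $\mu_p$, $\mu_q$, or $\mu_{pq}$ according to whether $(a,b)=(0,0)$, $a\ne 0\wedge b=0$, $a=0\wedge b\ne 0$, or $a\ne 0\wedge b\ne 0$, respectively.

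Next I would evaluate the four relevant character values as sums over these eigenvalues, using the standard identities $\sum_{a=0}^{p-1}\zeta_p^{a k} = p$ if $p\mid k$ and $0$ otherwise (and likewise for $q$). Concretely: $\chi(1)=d$ is the total number of eigenvalues, so $d = \mu_1 + (p-1)\mu_p + (q-1)\mu_q + (p-1)(q-1)\mu_{pq}$. For $\chi(u^p)=x$: the eigenvalues of $D(u^p)$ are the $p$-th powers of those of $D(u)$, i.e. $\zeta_q^{bp}$; grouping by the value of $b$ and summing $\sum_{a}\zeta_p^{0}=p$ over the $a$-index where needed, one gets $x$ as a combination of $\mu_1,\dots,\mu_{pq}$ with the $\zeta_q$-sums producing the vanishing for $b\ne 0$. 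Doing this for $x=\chi(u^p)$, $y=\chi(u^q)$, and $z=\chi(u)$ yields a linear system: four equations expressing $(d,x,y,z)$ in terms of $(\mu_1,\mu_p,\mu_q,\mu_{pq})$. The $4\times 4$ coefficient matrix is a tensor product of two Vandermonde-type matrices $\begin{pmatrix}1 & p-1 \\ 1 & -1\end{pmatrix}$ and $\begin{pmatrix}1 & q-1\\ 1 & -1\end{pmatrix}$ (up to ordering), so it is invertible; inverting it gives exactly the four displayed formulas, after multiplying through by $pq$ to clear denominators.

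The only genuine point requiring care — rather than a true obstacle — is justifying that the eigenvalue multiplicities depend only on the order ($1$, $p$, $q$, or $pq$) of the root of unity, which is precisely where integrality of $\chi$ enters via Galois invariance; without it the four numbers $\mu(\zeta_p^a\zeta_q^b,u,\chi)$ for fixed "type" need not coincide. After that, everything is the routine linear algebra of inverting the $2\times 2 \otimes 2\times 2$ system, which I would simply carry out (or verify by substituting the claimed formulas back into the four relations $d=\sum\dots$, $x=\chi(u^p)=\dots$, etc.). I would present the proof by writing out $\chi(1),\chi(u^p),\chi(u^q),\chi(u)$ explicitly as the four linear combinations of $\mu_1,\mu_p,\mu_q,\mu_{pq}$, then solving; alternatively, one can verify the identities directly by plugging the right-hand sides into $\mu(\xi,u,\chi) = \tfrac{1}{pq}\sum_{\sigma}\chi(u^\sigma)\overline{\xi}^{\,\sigma}$-type formulas, but the linear-system route is cleaner and self-contained.
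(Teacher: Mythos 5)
Your proposal is correct and follows essentially the same route as the paper: both express $\chi(1)$, $\chi(u^p)$, $\chi(u^q)$, $\chi(u)$ as linear combinations of the four multiplicities (using that integrality of $\chi$, extended linearly to $\mathbb{Z}G$, forces the eigenvalue multiplicities to be constant on Galois orbits of $pq$-th roots of unity) and then invert the resulting linear system. The tensor-product observation about the coefficient matrix is a pleasant extra but not a genuinely different argument.
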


\begin{proof}
Recall that the sum over all primitive $p$-th or $q$-th roots of unity is $-1$ while the sum over all primitive $pq$-th roots of unity is $1$. Note that for $i$ not divisible by $p$ we have $\mu(\zeta_p, u, \chi) = \mu(\zeta_p^i, u, \chi)$, for $i$ not divisible by $q$ we have $\mu(\zeta_q, u, \chi) = \mu(\zeta_q^i, u, \chi)$ and for $i$ not divisible by $pq$ also $\mu(\zeta_{pq}, u, \chi) = \mu(\zeta_{pq}^i, u, \chi)$. Denote by $D$ a representation corresponding to $\chi$. So if e.g. $\mu(\zeta_p, u, \chi) = k$, i.e. $\zeta_p$ appears $k$ times as an eigenvalue of $D(u)$, then also each other primitive $p$-th root of unity appears $k$ times and together they contribute $-k$ to $\chi(u) = \operatorname{tr}(D(u))$. Moreover thinking of $D(u)$ as a diagonalized matrix and taking the $p$-th power in $D(u^p)$ each of these eigenvalues becomes $1$ and so they contribute $(p-1) \cdot k$ to $\chi(u^p)$.   

Arguing in the same manner for primitive $pq$-th roots of unity we obtain that they contribute $\mu(\zeta_{pq}, u, \chi)$ to $\chi(u)$, while contributing e.g. $(q-1)\mu(\zeta_{pq}, u, \chi)$ to $\chi(u^q)$ as they become primitive primitive $p$-roots of unity as eigenvalues of $D(u^q)$.

Employing the same arguments for all roots of unity involved in $D(u)$ we obtain:
\begin{align*}
\chi(1) &= \mu(1, u, \chi) + (p-1)\mu(\zeta_p, u, \chi) + (q-1)\mu(\zeta_q, u, \chi) + (p-1)(q-1)	\mu(\zeta_{pq}, u, \chi), \\
\chi(u^q) &=  \mu(1, u, \chi) - \mu(\zeta_p, u, \chi) + (q-1)\mu(\zeta_q, u, \chi) - (q-1)\mu(\zeta_{pq}, u, \chi), \\
\chi(u^p) &= \mu(1, u, \chi) + (p-1)\mu(\zeta_p, u, \chi) - \mu(\zeta_q, u, \chi) - (p-1)\mu(\zeta_{pq}, u, \chi), \\
\chi(u) &= \mu(1, u, \chi) - \mu(\zeta_p, u, \chi) - \mu(\zeta_q, u, \chi) + \mu(\zeta_{pq}, u, \chi).
\end{align*}
The lemma now follows by linear transformations.
\end{proof}

\section{Combinatorics}

The main result of this section, which is the main ingredient in the proof of Theorem~\ref{MainProp}, is Proposition~\ref{MinMax}. It provides bounds on the possible entries in a semistandard skew tableau satisfying the lattice property for a special form of skew tableaux which are typical for our intended application.

\begin{definition}\label{Def form A} We call a skew diagram of form $\mathcal{A}$ if it consists of $p$ columns such that the second to $(p-1)$-th column all end at the same row, say the $m$-th row. 
This is equivalent to saying that there is a number $m$ such that for a box $b$ in the first to $(p-2)$-th column and in the $k$-th row, for $k \leq m$, there is always a box to the right of $b$ and moreover the second column contains no box in the $(m+1)$-th row. Furthermore we also allow the $p$-th column to be empty. 

We call the boxes in the first to $(p-1)$-th column which lie in the $m$-th row and above the \textit{body} of the skew diagram. The boxes of the first column lying below the $m$-th row are called the \textit{tail} and the $p$-th column the \textit{head} of the tableau.

For a semistandard skew tableau $T$ satisfying the lattice property we denote by $\gamma_i(T)$ the number of letters which appear at least $i$ times in $w(T)$. 
\end{definition}

\begin{example} In Figure~\ref{Fig Def 1} the upper skew diagram is of form $\mathcal{A}$ while the lower one is not. In the first skew diagram the boxes in the head are marked with $h$, the boxes of the body with $b$ and the boxes of the tail with $t$.

\begin{figure}[h]
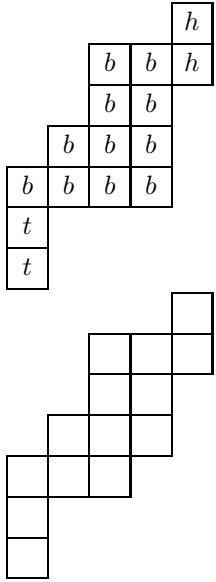

\begin{ytableau}
\none & \none & \none & \none & h \\
\none & \none & b & b & h \\
\none & \none & b & b \\
\none & b & b & b \\
b & b & b & b \\
t \\
t
\end{ytableau}

\begin{ytableau} \none & \none & \none & \none &  \\
\none & \none &  & & \\
\none & \none &  &  \\
\none &  &  &  \\
 &  & \\
 \\ \\
 \end{ytableau} 
{\caption{A skew diagram of form $\mathcal{A}$ and not of form $\mathcal{A}$}\label{Fig Def 1}}
\end{figure}

\end{example}

\textbf{Remark:} We clarify our way of speaking about the location of a box in a skew tableau relative to another box, so that the following proofs become more readable. Let $b$ be a box in a skew tableau $T$. If we speak about the box lying right from $b$ we mean the unique box neighbouring $b$ on the right. This box will be denoted by $b_r$ then. If we speak about a box lying right from $b$ this can be any box lying in a column of $T$ which lies to the right of the column containing $b$. We will also use a different notation for these kind of boxes.

We speak the same way of boxes lying to the left, above or below $b$.

\begin{lemma}\label{AppearsAtLeast}
Let $T$ be a semistandard skew tableau of form $\mathcal{A}$ with $p$ columns satisfying the lattice property. Let $b$ be a box in the $k$-th column lying in the body of $T$ with entry $e$ where $1 \leq k \leq p-1$. Then $w(b)$ contains $e$ at least $(p-k)$ times. 
\end{lemma}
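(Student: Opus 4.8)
The plan is to induct on $k$, counting downwards from $k = p-1$ to $k = 1$, exploiting the semistandardness in columns and the lattice property along the reading word. The base case $k = p-1$ is immediate: we must show $w(b)$ contains $e$ at least once, which is obvious since $w(b)$ ends with the entry $e$ of $b$ itself. For the inductive step, suppose the claim holds for all boxes in the $(k+1)$-th column lying in the body, and let $b$ be a box in the $k$-th column in the body with entry $e$. Because $T$ is of form $\mathcal{A}$ and $b$ lies in the body (hence in some row $\le m$, and in a column $\le p-2$ when $k \le p-2$), there is a box $b_r$ immediately to the right of $b$, in the $(k+1)$-th column; moreover $b_r$ still lies in the body. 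By semistandardness along the row, the entry $e'$ of $b_r$ satisfies $e' \ge e$.

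The key point is then to compare occurrences in $w(b)$ with occurrences in $w(b_r)$. First I would observe that $w(b_r)$ is an \emph{initial segment} of $w(b)$: in the top-to-bottom, right-to-left reading, $b_r$ is read immediately before $b$ (they are in the same row, adjacent, with $b$ to the left of $b_r$), so $w(b) = w(b_r)\,e$. Hence the number of times a given letter occurs in $w(b)$ is at least the number of times it occurs in $w(b_r)$. By the inductive hypothesis, $w(b_r)$ contains its entry $e'$ at least $p-(k+1) = p-k-1$ times, so $w(b)$ contains $e'$ at least $p-k-1$ times as well. If $e' = e$ we are not yet done — we need one more occurrence of $e$ — but that is supplied by $b$ itself: the final letter of $w(b)$ is $e = e'$, which is not counted among the $p-k-1$ occurrences inside $w(b_r)$, giving $p-k$ in total. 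If instead $e' > e$, I would invoke the lattice property: since $w(b)$ (indeed already $w(b_r)$, extended by any further reading, but here it suffices to use $w(b)$) contains the letter $e'$ at least $p-k-1$ times and $e' > e \ge e'-1 \ge \dots$, walking down from $e'$ to $e$ one step at a time, the lattice condition forces each smaller letter to appear at least as often as the next larger one; in particular $e$ appears in $w(b)$ at least $p-k-1$ times, and then the trailing $e$ of $b$ pushes this to $p-k$.

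I expect the main obstacle to be the bookkeeping in the case $e' > e$: the lattice property is stated for the words $w(b)$ ending at arbitrary boxes, and one must be careful that the count of $e'$'s used as input is genuinely a count \emph{within the same word} $w(b)$ at which we are evaluating the count of $e$'s, so that the chain of inequalities $\#_e w(b) \ge \#_{e-1}\text{-case}\ \dots \ge \#_{e'} w(b) \ge p-k-1$ is legitimate — here I should apply the inductive hypothesis to $b_r$ to get $\#_{e'} w(b_r) \ge p-k-1$, then monotonicity of counts under taking the superword $w(b) \supseteq w(b_r)$, then the lattice property of $w(b)$ to descend from $e'$ to $e$, and finally add the terminal box. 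A secondary subtlety is confirming that $b_r$ really lies in the body whenever $b$ does and $k \le p-2$: this is exactly what Definition~\ref{Def form A} guarantees, since a body box of the first to $(p-2)$-th column in row $k' \le m$ always has a box to its right, and that neighbour is again in row $k' \le m$, hence in the body. With these points pinned down the induction closes.
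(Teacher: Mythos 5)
Your strategy is exactly the paper's: decreasing induction on the column index, using the box $b_r$ immediately to the right of $b$ (which exists and again lies in the body because $T$ is of form $\mathcal{A}$), semistandardness to get $e \le e'$, the inductive hypothesis applied to $b_r$, and the lattice property to pass from occurrences of $e'$ to occurrences of $e$. The base case, the case $e' = e$, and your check that $b_r$ stays in the body are all fine.

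In the case $e' > e$, however, your bookkeeping has a slip that, as written, fails to deliver the bound: you descend from $e'$ to $e$ using the lattice property of the word $w(b)$, concluding that $e$ occurs at least $p-k-1$ times in $w(b)$, and then you ``add the terminal box''. That last addition double-counts, since the entry of $b$ is itself a letter of $w(b)$ and may already be among the $p-k-1$ occurrences just counted; the chain $w(b_r)$-count $\rightarrow$ monotonicity $\rightarrow$ lattice descent in $w(b)$ $\rightarrow$ add the entry of $b$ therefore only certifies $p-k-1$ occurrences of $e$ in $w(b)$, not $p-k$. The repair is immediate and is what the paper does (you even gesture at it in your parenthesis): perform the lattice descent inside $w(b_r)$, which is legitimate because the lattice condition holds at every box, in particular at $b_r$. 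By induction $e'$ occurs at least $p-k-1$ times in $w(b_r)$, hence so does $e$ since $e \le e'$; and since $w(b)$ is $w(b_r)$ followed by the entry $e$ of $b$, appending that final letter now genuinely raises the count to $p-k$, uniformly in both cases $e' = e$ and $e' > e$. With the descent relocated to $w(b_r)$, your argument coincides with the paper's proof.
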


\begin{proof}
We will argue by decreasing induction. For $k = p-1$ the claim is clear, since being the entry in $b$ the letter $e$ appears in $w(b)$ at least once. So let $k < p-1$ and let $e_r$ be the entry in the box $b_r$ right from $b$. Note that $b_r$ exists by the assumption that $T$ is of form $\mathcal{A}$ and $b$ a box in the body of $T$. Then by induction the letter $e_r$ appears at least $p-k-1$ times in $w(b_r)$. As $e \leq e_r$, since $T$ is semistandard, and $w(b_r)$ satisfies the lattice property also $e$ must appear at least $p-k-1$ times in $w(b_r)$. Hence $e$ appears at least $p-k$ times in $w(b)$. 
\end{proof}

\begin{lemma}\label{AtMosth}
Let $T$ be a semistandard skew tableau of form $\mathcal{A}$ with $p$ columns satisfying the lattice property. Let $1 \leq k \leq p-2$ and let $b$ be a box in the $k$-th column of $T$ inside the body of $T$ with entry $e$. Assume that the box right from $b$ is the $h$-th box in the $(k+1)$-th column of $T$. Then $e \leq h$.
\end{lemma}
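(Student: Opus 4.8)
The plan is to reduce the inequality to a statement about the reading word ending at $b_r$, and then to a single inequality between the multiplicities of two letters in that word.

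\textbf{Setup and structure.} Write $b=(j,k)$ (so $b$ lies in row $j$), let $b_r=(j,k+1)$ be the box right of $b$, and let $e'$ be the entry of $b_r$, so that $e\le e'$ since $T$ is semistandard. Because $b_r$ is the $h$-th box of column $k+1$ and lies in row $j$, column $k+1$ starts in row $s:=j-h+1$. First I record a structural consequence of $T$ being of form $\mathcal{A}$: in a skew diagram the starting rows of the columns weakly decrease from left to right (this just says the removed shape is a partition), so the columns $k+1,k+2,\dots,p-1$ all start no later than row $s$; by the definition of form $\mathcal{A}$ they all end in row $m\ge j$. Hence each of the rows $s,s+1,\dots,j$ contains a box in every one of the columns $k+1,\dots,p-1$, and in particular the $h$ topmost entries $g_1<g_2<\dots<g_h=e'$ of column $k+1$ satisfy $e'=g_h\ge h$.

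\textbf{Reduction to the reading word.} Since $b$ is the box immediately to the left of $b_r$ in row $j$, the word $w(b)$ is obtained from $v:=w(b_r)$ by appending the single letter $e$. Both $v$ and $w(b)$ are prefixes of $w(T)$ and hence satisfy the lattice property, so the content $\nu=(\nu_1,\nu_2,\dots)$ of $v$ is a partition. Appending $e$ to $v$ preserves the lattice property if and only if $e=1$ or $\nu_{e-1}>\nu_e$; combined with $e\le e'$ this shows it suffices to prove
\[\nu_h=\nu_{h+1}=\dots=\nu_{e'}.\]
Indeed, granting this, if $e>h$ then $h<e\le e'$, hence $e\ge 2$ and $\nu_{e-1}=\nu_e$, contradicting that $w(b)=v\cdot e$ satisfies the lattice property; so $e\le h$. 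As $\nu$ is a partition and $h\le e'$, this chain of equalities is equivalent to the single inequality $\nu_h\le\nu_{e'}$: the letter $h$ occurs in $w(b_r)$ no more often than the letter $e'$.

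\textbf{The key inequality (the hard part).} It remains to prove $\nu_h\le\nu_{e'}$, and this is where the work lies. Applying Lemma~\ref{AppearsAtLeast} to $b_r$, which lies in column $k+1$ of the body, gives that $e'$ occurs at least $p-k-1$ times in $w(b_r)$, i.e. $\nu_{e'}\ge p-k-1$; and since every box of row $j$ in the columns $k+1,\dots,p-1$ carries an entry $\ge e'\ge h$, in the only interesting case $e'>h$ each occurrence of the letter $h$ in $w(b_r)$ already lies among the rows $1,\dots,j-1$. One then has to bound the number of such occurrences against $\nu_{e'}$ using the rectangular block of columns $k+1,\dots,p-1$ spanning rows $s,\dots,j$ produced above together with the lattice property, most naturally by an induction mirroring the proof of Lemma~\ref{AppearsAtLeast}, decreasing in the column index $k$ (or, alternatively, in the number of columns $p$). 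Turning this geometric ``rectangle'' information into the arithmetic inequality $\nu_h\le\nu_{e'}$ for the reading word is exactly the main obstacle; the earlier two steps are routine unwinding of the definitions of form $\mathcal{A}$, of the reading word, and of the lattice property.
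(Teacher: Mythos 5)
Your preparatory steps are sound: in a skew diagram the starting rows of columns do weakly decrease to the right, with the row-by-row, right-to-left reading used here one indeed has $w(b)=w(b_r)\cdot e$, and your reduction of $e\le h$ to the chain of equalities $\nu_h=\nu_{h+1}=\dots=\nu_{e'}$, equivalently to the single inequality $\nu_h\le\nu_{e'}$, is correctly argued (using $e'\ge h$ and the lattice property of $w(b)$). But the argument stops exactly where the content of the lemma begins: you never prove $\nu_h\le\nu_{e'}$. All you record is the easy bound $\nu_{e'}\ge p-k-1$ from Lemma~\ref{AppearsAtLeast} and the remark that occurrences of $h$ lie in rows $1,\dots,j-1$, and then you state that turning this into the inequality ``is exactly the main obstacle'', to be handled ``most naturally by an induction mirroring Lemma~\ref{AppearsAtLeast}''. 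That is an acknowledgement of a gap, not a proof. Moreover the proposed target is, if anything, harder than the lemma itself: the letter $h$ can a priori occur once in each of the columns $k+2,\dots,p$ and once more in column $k+1$ above $b_r$, which only gives $\nu_h\le p-k$ against the guaranteed $\nu_{e'}\ge p-k-1$, and ruling out occurrences of $h$ in the columns $1,\dots,k$ above row $j$ seems to need precisely the bound of Lemma~\ref{AtMosth} applied to those earlier boxes. So the sketched column-index induction does not obviously close the remaining gap, and nothing in the proposal does.

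For comparison, the paper proceeds quite differently: it takes the first box $b$ in reading order violating $e\le h$ and shows, by a decreasing induction on the letters $e-1,e-2,\dots$ combined with Lemma~\ref{AppearsAtLeast} and the lattice property, that the $(k+1)$-th column must contain every letter $1,\dots,e-1$; since its entries strictly increase, its $h$-th box then carries the entry $h<e$, contradicting semistandardness in the row of $b$. If you want to keep your reformulation, you would need to supply an argument of comparable strength establishing $\nu_h\le\nu_{e'}$ (or directly $\nu_{e-1}=\nu_e$ under the assumption $e>h$); as it stands, the key combinatorial step is missing.
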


\begin{proof}
Assume that reading from top to bottom and right to left $b$ is the first box contradicting the claim. In particular, $e > h$. Let $b_r$ be the box right from $b$ with entry $e_r$.
We will show that the $(k+1)$-th column contains every letter $1$, $2$, ...,$e-1$. As $T$ is semistandrad the entries in the $(k+1)$-th column of $T$ are strictly increasing when read from top to bottom. So assuming that the column contains all entries between $1$ and $e-1$ the entry in the first box of the column must in fact be $1$, the entry in the second box $2$, ..., the entry in the $(e-1)$-th box must be $e-1$. In particular, as $e > h$, the $h$-th box will contain $h$, meaning $h = e_r  < e$, contradicting that $T$ is a semistandard tableau. So once we show that the $(k+1)$-th column contains every letter $1$, $2$, ..., $e-1$ the lemma will follow by contradiction.

We will show also that when $b_{k+1}$ is a box in the $(k+1)$-th column containing an entry $e_s$ smaller than $e$ then this entry appears exactly $p-k$ times in $w(b_{k+1})$ and if the box left from $b_{k+1}$ exists then the entry of this box is strictly smaller than $e_s$. 

We will argue by decreasing induction. So we will first show that the $(k+1)$-th column contains the letter $e-1$, that if $b_{k+1}$ denotes the box in the $(k+1)$-th column containing $e-1$, then $e-1$ appears in $w(b_{k+1})$ exactly $p-k$ times and if the box left from $b_{k+1}$ exists its entry is strictly smaller than $e-1$. 
By Lemma~\ref{AppearsAtLeast} the word $w(b)$ contains the letter $e$ at least $p-k$ times and so $w(b)$ also contains every letter smaller than $e$ at least $p-k$ times, as $w(b)$ satisfies the lattice property. By assumption $b$ is the first box violating the lemma when going through the tableau top to bottom and right to left. So if the box $b_a$ above $b$ exists and has entry $e_a$ then the box right from $b_a$ is the $(h-1)$-th box in the $(k+1)$-th column. So $e_a \leq h - 1$ and $e - e_a >  h - (h-1) = 1$. In any case, if $b_a$ exists or not, we have that the $k$-th column does not contain the entry $e-1$, as the entries in the $k$-th column must strictly increase and there are no boxes between the entries $e_a$ and $e$. Note that then also $e-1$ is not the entry in a box lying above $b$ and to the left of $b$, since this would be a box contradicting the fact that $T$ is semistandard. 
Since $w(b)$ contains $e-1$ at least $p-k$ times and $w(b)$ does not take into account boxes lying below $b$ as well as boxes lying left from $b$ and not above $b$, we conclude that every column right from $b$ must contain $e-1$. In particular, the $(k+1)$-th column. Moreover if $b_{k+1}$ is the box in the $(k+1)$-th column containing $e-1$ then $e-1$ appears exactly $p-k$ times in $w(b_{k+1})$, as the boxes between $b_{k+1}$ and $b$ (counted right to left and top to bottom) do not contain the letter $e-1$. Furthermore $b_{k+1}$ is a box above $b_r$, since otherwise $e_r \leq e-1 < e$, which would contradict the fact that $T$ is semistandard. Since the $k$-th column does not contain $e-1$ the box lying left from $b_{k+1}$, if it exists, must contain an entry smaller than $e-1$, since $T$ is semistandard. This proofs the base case of our induction.

The arguments for the induction step are very similar to the arguments for the base case.
So assume that $e_s \leq e-1$ is an entry contained in the box $b_{k+1}$ which lies in the $(k+1)$-th column, that $e_s$ appears exactly $p-k$ times in $w(b_{k+1})$ and that the box left from $b_{k+1}$, if existent, contains an entry smaller than $e_s$. Denote the box left from $b_{k+1}$, if existent, by $(b_{k+1})_l$. Then $(b_{k+1})_l$ contains an entry strictly smaller than $e_s$ by induction. So the the box above $(b_{k+1})_l$, if existent, has maximal entry $e_s-2$, as $T$ is semistandard. In particular $e_s-1$ is not an entry in a box lying above and to the left of $b_{k+1}$. But, by Lemma~\ref{AppearsAtLeast}, the word $w(b_{k+1})$ contains $e_s$ at least $p-k$ times, so it contains $e_s-1$ at least $p-k$ times and we get that $e_s-1$ must be an entry in the $(k+1)$-th column. So it must lie in the box $(b_{k+1})_a$ above $b_{k+1}$. Then $e_s-1$ appears exactly $p-k$ times in $w((b_{k+1})_a)$, as it does not appear in a box above and to the left of $(b_{k+1})_a$. Also, the box left from $(b_{k+1})_a$, if existent, contains an entry smaller than $e_s-1$. This finishes the induction step and thus the lemma is proved.
\end{proof}

\begin{proposition}\label{MinMax}
Let $T$ be a semistandard skew tableau of form $\mathcal{A}$ with $p$ columns satisfying the lattice property. Let $1 \leq k \leq p-1$ and let $h$ be the  height of the $k$-th column inside the body of $T$, i.e. the number of boxes in the $k$-th column of $T$ which lie in the body (so if $k = 1$ we do not count the boxes in the tail).

Then $\gamma_{p-k}(T) \geq h$ and if $k \geq 2$ then $\gamma_{p-k+2}(T) \leq h$.
\end{proposition}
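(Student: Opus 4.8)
The plan is to prove the two inequalities separately; the first is short, the second is the substantial one.

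\emph{Lower bound.} If $h=0$ there is nothing to show, so assume $h\ge1$ and let $b$ be the lowest box of the $k$-th column lying in the body of $T$. Since entries strictly increase down a column of a semistandard tableau, the $h$ boxes of the $k$-th column inside the body carry strictly increasing positive integers, so the entry $e$ of $b$ satisfies $e\ge h$. By Lemma~\ref{AppearsAtLeast} the word $w(b)$ contains the letter $e$ at least $p-k$ times, hence --- $w(b)$ satisfying the lattice property --- it contains each of $1,2,\dots,e$ at least $p-k$ times; as $w(b)$ is an initial segment of $w(T)$, the same is true of $w(T)$, and therefore $\gamma_{p-k}(T)\ge e\ge h$.

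\emph{Upper bound.} Assume $k\ge2$. Since $w(T)$ satisfies the lattice property, the letters occurring at least $p-k+2$ times form an initial segment $\{1,\dots,\gamma_{p-k+2}(T)\}$, so the claim $\gamma_{p-k+2}(T)\le h$ is equivalent to saying that the letter $h+1$ occurs at most $p-k+1$ times in $w(T)$. The first step is to confine the occurrences of $h+1$. Writing $h_j$ for the body-height of column $j$, I would note that form $\mathcal A$ forces $h_1\le h_2\le\dots\le h_{p-1}$ (for $j\le p-2$ the rows occupied by the body of column $j$ lie among those occupied by the body of column $j+1$), and that applying Lemma~\ref{AtMosth} to the lowest box of the body of column $j$ --- which lies in row $m$, with right neighbour the $h_{j+1}$-th box of column $j+1$ --- shows every entry in the body of column $j$ to be at most $h_{j+1}$, for $1\le j\le p-2$. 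Consequently every entry in the body of each of the columns $1,\dots,k-1$ is at most $h_k=h$, so $h+1$ occurs in no body box of column $1$ and in none of the columns $2,\dots,k-1$. As a letter occurs at most once per column, the occurrences of $h+1$ are confined to the tail of column $1$, the bodies of the columns $k,\dots,p-1$, and the head --- at most $p-k+2$ columns, and strictly fewer unless $h+1$ occurs in all of them.

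So it remains to rule out that $h+1$ occurs in all of these, and by the confinement it is enough to rule out that $h+1$ occurs simultaneously in the tail of column $1$, in the body of column $k$, and in the head. Here the main auxiliary fact is a description of the head: running through the prefixes of $w(T)$ with the lattice property --- as in the base case of the proof of Lemma~\ref{AtMosth} --- shows that each box in row $r$ of $T$ carries a letter at most $r$ and that the topmost box of column $p$ carries the letter $1$, so inductively the head carries $1,2,\dots$ consecutively from the top; in particular, if $h+1$ lies in the head it lies in row $h+1$ of column $p$, forcing the whole shape to reach down to row $h+1$. The same ``entry in row $r$ is at most $r$'' bound shows that a copy of $h+1$ in the body of column $k$ necessarily lies in a row $\ge h+1$: reading up to such a box, all letters $\ge h+1$ seen so far come from that box and from boxes to its right in its row, whereas the earlier rows have index $<h+1$ and carry no such letter, so the lattice property would fail otherwise. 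One then follows the reading word past the head copy, the column-$k$ copy and finally the tail copy of $h+1$, comparing the running counts of $h+1$ and of the smaller letters via the lattice property; the small letters --- already largely consumed high up, in the head and in the tops of the columns --- cannot keep pace with a third, low, copy of $h+1$, which is the desired contradiction. This bookkeeping argument with the lattice property, similar in flavour and length to the proof of Lemma~\ref{AtMosth}, is the step I expect to be the main obstacle; the rest is the elementary combinatorics above.
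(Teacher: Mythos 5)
Your lower bound argument is correct and is exactly the paper's: take the lowest box of the $k$-th column, use $e\ge h$ and Lemma~\ref{AppearsAtLeast}. Your confinement step for the upper bound is also sound and again matches the paper: by Lemma~\ref{AtMosth} no letter $>h$ occurs in the bodies of columns $1,\dots,k-1$, so $p-k+2$ occurrences of $h+1$ force one occurrence in the tail, in each of the columns $k,\dots,p-1$, and in the head.

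The genuine gap is the step you yourself flag as the ``main obstacle'', and it is not merely unfinished: the reduction on which it rests is false. You claim it suffices to rule out that $h+1$ occurs \emph{simultaneously} in the tail, in the body of column $k$, and in the head, and you plan to get a contradiction from these three copies alone. But such a configuration can occur. Take $p=5$, $k=3$, $\lambda=(5,5,5,4,1)$, $\mu=(4,3,2,0,0)$ and the filling
\begin{center}
\begin{ytableau}
\none & \none & \none & \none & 1 \\
\none & \none & \none & 1 & 2 \\
\none & \none & 1 & 2 & 3 \\
1 & 2 & 3 & 4 \\
3
\end{ytableau}
\end{center}
This is a semistandard skew tableau of form $\mathcal{A}$ with $m=4$ (columns $2,3,4$ end at row $4$), its reading word $1\,2\,1\,3\,2\,1\,4\,3\,2\,1\,3$ satisfies the lattice property, and for $k=3$ one has $h=h_3=2$. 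The letter $h+1=3$ sits in the tail (row $5$), in the body of column $3$ (row $4$) and in the head (row $3$) — yet column $4$ contains no $3$, so $3$ occurs only three times, $\gamma_4(T)=1\le h$, and Proposition~\ref{MinMax} is not violated. Hence no ``bookkeeping with the lattice property'' can derive a contradiction from the three copies you single out; any correct argument must exploit the occurrences in \emph{all} columns $k,\dots,p-1$ as well. This is precisely what the paper does: setting $s=\gamma_{p-k+2}(T)>h$, every letter $h+1,\dots,s$ occurs in the tail, in column $k$ and in every column to its right, and a decreasing induction in the spirit of the proof of Lemma~\ref{AtMosth} (keeping track of letters occurring exactly $p-k+1$ times in suitable prefixes) then forces column $k$ to contain every letter $1,\dots,s$ — impossible, as that column has only $h<s$ boxes. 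Your auxiliary observations (entries in row $r$ are at most $r$, the head is filled with $1,2,\dots$ consecutively) are correct but do not substitute for this induction.
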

\begin{proof}
Let $b$ be the lowest box in the $k$-th column and assume the entry of $b$ is $e$. 

Then just for the reason that the entries in a column are strictly increasing we have $h\le e$. By Lemma~\ref{AppearsAtLeast} the entry $e$ appears in $w(b)$ at least $p-k$ times, so each letter $1$, $2$ ..., $h$ appears at least $p-k$ times in $w(T)$ and hence $\gamma_{p-k}(T) \geq h$.

So assume $k \geq 2$ and assume $s = \gamma_{p-k+2}(T) > h$. By Lemma~\ref{AtMosth} the maximal possible entry in the box left from $b$, if this box exists, is $h$. So also an entry in a box to the left from $b$ which lies in the body is at most $h$ (note that there is no box below $b$ anywhere in the body, as $b$ is the lowest box in its column and $T$ of form $\mathcal{A}$). So inside the body an entry bigger than $h$ does not appear in the $1$st to $(k-1)$-th column. Hence each letter $h+1$, ..., $s$ must appear in the tail and in the $k$-th column and every column right from the $k$-th column, as otherwise it can not appear $p-k+2$ or more times in total. We will show that the $k$-th column then contains every letter $1$,..., $s$, which will yield the final contradiction, since clearly the $k$-th column can not contain more than $h$ entries. The proof is similar to the proof of Lemma~\ref{AtMosth}.

Let $b$ be the box in the $k$-th column containing $s$. Note that also $s$ can not appear in a box lying below and to the right of $b$, as $T$ is semistandard. Then $s$ appears in $w(b)$ exactly $p-k+1$ times, since it appears at least overall $p-k+2$ times in $w(T)$ and it does not appear in the $1$st to $(k-1)$-th column inside the body. The box left from $b$, if existent, contains an entry smaller than $h$, so also smaller than $s$, by Lemma~\ref{AtMosth}. We will show by decreasing induction that the $k$-th column contains every entry $e$ between $1$ and $s$ such that if $b_e$ is the box containing $e$ then $e$ appears exactly $p-k+1$ times in $w(b_e)$ and the box left from $b_e$, if existent, contains an entry smaller than $e$. By the above this holds for $e = s$, providing the base case of the induction.

So assume that this holds for a certain $e+1$ and let $b_{e+1}$ be the box in the $k$-th column containing $e+1$. Denote by $(b_{e+1})_a$, if it exists, the box above $b_{e+1}$ and by $(b_{e+1})_l$, if it exists, the box to the left of $b_{e+1}$. If $(b_{e+1})_l$ exists its entry is at most $e$, by induction. So if the box above $(b_{e+1})_l$ exists its entry is at most $e-1$. This implies that any entry lying to the left and not below $(b_{e+1})_a$ is smaller than $e$. Since by induction $e+1$ appears exactly $p-k+1$ times in $w(b_{e+1})$, also $e$ appears at least $p+k-1$ times in $w(b_{e+1})$. Combining this with the conclusion before, we see that $(b_{e+1})_a$ must exist and its entry must be $e$. Moreover the letter $e$ appears exactly $p-k+1$ times in $w((b_{e+1})_a)$ and the box to the left of $(b_{e+1})_a$ contains an entry smaller than $e$. This finishes the induction step and hence the proof.
\end{proof}

\section{Proofs of main results}

We proceed to prove Theorem~\ref{MainProp}. For the basic facts about Brauer trees we refer to \cite{HissLux}. Here we will only need that if $(K,R, k)$ is a $p$-modular system such that the characters labelling the vertices of the Brauer tree are afforded by $RG$-modules then each edge of the Brauer tree corresponds to a simple $kG$-module, each vertex of the Brauer tree corresponds to a simple $RG$-module with the given character and after reducing such a simple $RG$-module modulo the maximal ideal of $R$ the composition factors of this module are those corresponding to the adjacent edges, each with multiplicity $1$. 

\textit{Proof of Theorem~\ref{MainProp}:} Let $R$ be a complete discrete valuation ring containing a primitive $m$-th root of unity such that $R$ is an algebraic extension of $\mathbb{Z}_p$, representations affording the characters $\chi_1, \ ..., \ \chi_p$ can be realized over $R$ and $p$ is unramified in $R$. Such a ring $R$ exists by a result of Fong \cite[Remark 2.5]{4primaryII}. Let $k$ be the residue class field of $R$. We use the bar-notation to denote reduction modulo the maximal ideal of $R$ also with respect to modules. Let $M$ be a $k\langle u \rangle$-module. Adapting the notation from skew tableaus we denote by $\gamma_i(M)$ the number of direct indecomposable summands of $M$ of dimension at least $i$. 

Let $M'_1, \ ..., \ M'_p$ be $RG$-modules corresponding to the characters $\chi_1, \ ..., \ \chi_p$ and let, as explained in Proposition~\ref{PropChar0}, $\overline{M_1}, \ ..., \ \overline{M_p}$ be the direct summands of $\overline{M'_1}, ..., \ \overline{M'_p}$ respectively on which $u^p$ acts as $\xi^p$. Let $S'_1,\ ..., \ S'_{p-1}$ be the simple $kG$-modules corresponding to the edges of the Brauer tree in the natural order, i.e. $S'_i$ corresponds to the edge with vertices $\chi_i$ and $\chi_{i+1}$. Let $S_1, \ ..., \ S_{p-1}$ be the direct summand of $S'_1, \ ..., \ S'_{p-1}$ respectively on which $u^p$ acts as $\xi^p$. We will view $\overline{M_i}$ and $S_i$ as $k\langle u \rangle$-modules, unless explicitly stated otherwise. Let $\lambda_i$ be the partition corresponding to $\overline{M_i}$ and $\mu_i$ be the partition corresponding to $S_i$, in the sense of Section~\ref{sec:LatMet}. Note that by Proposition~\ref{PropChar0} if $S_j$ is a submodule of $\overline{M_i}$ then the skew diagram corresponding to $\lambda_i/\mu_j$ is of form $\mathcal{A}$ with $p$ columns, since the indecomposable summands of $\overline{M_i}$ are all of dimension $1$, $p-1$ or $p$, i.e. the possible length of rows in a Young diagram corresponding to $\lambda_i$ are $1$, $p-1$ and $p$. We will use the following fact several times.

\textit{Claim:} If we want to compute the possible isomorphism types of $S_i$ for some $1 < i < p-1$ using Theorem~\ref{ThLRCoeff} we can consider a semistandard skew tableau satisfying the lattice property of form $\lambda_i/\mu_{i-1}$ or $\lambda_{i+1}/\mu_{i+1}$. 

\textit{Proof of the claim:} The composition factors of $\overline{M'_i}$ as $kG$-module are $S'_{i-1}$ and $S'_i$ by the properties of Brauer trees. If $S'_{i-1}$ is a submodule of $\overline{M'_i}$ as $kG$-module then $S_i \cong \overline{M_i}/S_{i-1}$. So $S_i$ corresponds to a skew tableau of form $\lambda_i/\mu_{i-1}$ and $c^{\lambda_i}_{\mu_{i-1}, \mu_i} \neq 0$. If on the other hand $S'_i$ is a submodule of $\overline{M'_i}$ as $kG$-module then $S_{i-1}$ corresponds to a skew tableau of the form $\lambda_i/\mu_i$ and $c^{\lambda_i}_{\mu_i, \mu_{i-1}} \neq 0$. As the Littlewood-Richardson coefficient is symmetric this also implies $c^{\lambda_i}_{\mu_{i-1}, \mu_i} \neq 0$. So it suffices to consider $\lambda_i/\mu_{i-1}$, or rather the question if $c^{\lambda_i}_{\mu_{i-1}, \mu_i} \neq 0$. 

On the other hand we can consider $\overline{M'_{i+1}}$ as $kG$-module which has composition factors $S'_{i}$ and $S'_{i+1}$. If $S'_{i}$ is a submodule of $\overline{M'_i}$ as $kG$-module then $S_{i+1} \cong \overline{M_{i+1}}/S_{i}$. So $S_i$ corresponds to a skew tableau of form $\lambda_{i+1}/\mu_{i}$ and $c^{\lambda_{i+1}}_{\mu_{i}, \mu_{i+1}} \neq 0$. On the other hand, as in the case before, $S_{i}$ corresponds to a skew tableau of the form $\lambda_{i+1}/\mu_{i+1}$ and $c^{\lambda_{i+1}}_{\mu_{i+1}, \mu_{i}} \neq 0$. The symmetry of the Littlewood-Richardson coefficient implies that it suffices to consider if $c^{\lambda_{i+1}}_{\mu_{i+1}, \mu_i} \neq 0$. This finishes the proof of the claim.

We will next show that 
\begin{align}\label{ineq:geq}
\gamma_{p-2}(S_{p-2}) \geq \mu(\xi \cdot \zeta_p, u, \chi_{p-1}) - \mu(\xi \cdot \zeta_p, u, \chi_p)
\end{align}
 and also 
\begin{align}\label{ineq:leq}
\gamma_{p-2}(S_{p-2}) \leq \mu(\xi, u, \chi_1) - \sum_{i=1}^{p-2} (-1)^i \mu(\xi \cdot \zeta_p, u, \chi_i)
\end{align}

\noindent
which will imply the theorem.
First by the claim above the isomorphism type of $S_{p-2}$ can be described by a semistandard skew tableau $T$ satisfying the lattice property of shape $\lambda_{p-1}/\mu_{p-1}$. Note that $\mu_{p-1} = \lambda_p$ since $\overline{M_p} \cong S_{p-1}$, as $M_p$ corresponds to a vertex of the Brauer tree which has only one neighbour. By Proposition~\ref{MinMax} we know that $\gamma_{p-2}(T) = \gamma_{p-2}(S_{p-2})$ is at least as big as the height of the second column of $T$. Now by Proposition~\ref{PropChar0} the number of indecomposable direct summands of dimension at least $2$ in $\overline{M_{p-1}}$, i.e. the number of rows of length at least $2$ in a Young diagram corresponding to $\lambda_{p-1}$, is $\mu(\xi \cdot \zeta_p,u,\chi_{p-1})$ and the analogues statement holds for $\lambda_p$. So the height of the second column of $T$ is $\mu(\xi \cdot \zeta_p, u, \chi_{p-1}) - \mu(\xi \cdot \zeta_p, u, \chi_p)$, proving \eqref{ineq:geq}.

To prove \eqref{ineq:leq} we will prove by induction on $r$ that if $r$ is odd then 
$$\gamma_{r}(S_{r}) \leq \mu(\xi, u, \chi_1) - \sum_{i=1}^{r} (-1)^i \mu(\xi \cdot \zeta_p, u, \chi_i)$$
 and if $r$ is even then 
 $$\gamma_{p+1-r}(S_{r}) \geq -\mu(\xi, u, \chi_1) + \sum_{i=1}^{r} (-1)^i \mu(\xi \cdot \zeta_p, u, \chi_i).$$ 

Once this induction reaches $r = p-2$, which is an odd number, the first inequality will be exactly \eqref{ineq:leq} and so the proof of the theorem will be finished. 

For the base case consider $r = 1$. Then $\gamma_1(S_1)$ is the number of indecomposable direct summands of $S_1 \cong \overline{M_1}$ which by Proposition~\ref{PropChar0} is at most $\mu(\xi, u, \chi_1) + \mu(\xi \cdot \zeta_p, u, \chi_1)$. 

For the induction step assume that the statement holds for all numbers smaller than $r$. First assume that $r$ is even. Now $S_r$ corresponds to a semistandard skew tableau $T$ satisfying the lattice property of form $\lambda_r/\mu_{r-1}$. So by Proposition~\ref{MinMax} the number $\gamma_{p+1-r}(S_r) = \gamma_{p+1-r}(T)$ is at least as big as the height of the $(r-1)$-th column in the body of $T$. Since $r \leq p$ this numbers equals the difference of the number of direct indecomposable summands of $\overline{M_r}$ which is at least $(r-1)$-dimensional, but not $1$-dimensional, and the number of indecomposable direct summands of $S_{r-1}$ of dimension at least $r-1$, i.e. $\gamma_{r-1}(S_{r-1})$. The number of indecomposable direct summands of $\overline{M_r}$ which are at least $(r-1)$-dimensional, but not $1$-dimensional, is $\mu(\xi \cdot \zeta_p,u,\chi_r)$ by Proposition~\ref{PropChar0}.  Since by induction $\gamma_{r-1}(S_{r-1}) \leq \mu(\xi, u, \chi_1) - \sum_{i=1}^{r-1} (-1)^i \mu(\xi \cdot \zeta_p, u, \chi_i)$ we obtain that the height of the $(r-1)$-th column in the body of $T$ is at least 
$$ \mu(\xi \cdot \zeta_p,u,\chi_r) - (\mu(\xi, u, \chi_1) - \sum_{i=1}^{r-1} (-1)^i \mu(\xi \cdot \zeta_p, u, \chi_i)) = -\mu(\xi, u, \chi_1) + \sum_{i=1}^{r} (-1)^i \mu(\xi \cdot \zeta_p, u, \chi_i).$$

Now assume that $r$ is odd, bigger than $2$ and smaller than $p-1$. Also in this case $S_r$ corresponds to a semistandard skew tableau $T$ satisfying the lattice property of form $\lambda_r/\mu_{r-1}$. So the number $\gamma_r(S_r) = \gamma_r(T)$ is at most as big as the height of the $(p+2-r)$-th column in the body of $T$ by Proposition~\ref{MinMax}. This is the difference of the number of indecomposable direct summands of dimension at least $p+2-r$ in $\overline{M_r}$, i.e. $\gamma_{p+2-r}(\overline{M_r})$, and the number of indecomposable direct summands of $S_{r-1}$ of dimension at least $p+2-r$, i.e. $\gamma_{p+2-r}(S_{r-1}) = \gamma_{p+1-(r-1)}(S_{r-1})$. By Proposition~\ref{PropChar0} we know $\gamma_{p+2-r}(\overline{M_r}) = \mu(\xi \cdot \zeta_p, u, \chi_r)$ and by induction $\gamma_{p+1-(r-1)}(S_{r-1})$ is at least $-\mu(\xi, u, \chi_1) + \sum_{i=1}^{r-1} (-1)^i \mu(\xi \cdot \zeta_p, u, \chi_i)$. So the height of the $(p+2-r)$-th column of $T$ is at most 
$$\mu(\xi \cdot \zeta_p, u, \chi_r) - (-\mu(\xi, u, \chi_1) + \sum_{i=1}^{r-1} (-1)^i \mu(\xi \cdot \zeta_p, u, \chi_i)) = \mu(\xi, u, \chi_1) - \sum_{i=1}^{r} (-1)^i \mu(\xi \cdot \zeta_p, u, \chi_i).  $$
This finishes the proof of the induction claim and hence the proof of the theorem. \hfill \qed \\

The proof of the Prime Graph Question for the Conway simple groups now boils down to the application of Theorem~\ref{MainProp} for various cases described in \cite{BovdiKonovalovConway}. The information contained in \cite{BovdiKonovalovConway} is not completely sufficient for our purposes, since to compute multiplicities of eigenvalues we do not only need to know the partial augmentations of a unit $u \in \mathrm{V}(\mathbb{Z}G)$, but also the partial augmentations of its proper powers. These can be computed using the \texttt{GAP}-package \texttt{HeLP} \cite{HeLPPackage} and we will indicate in all cases we need to consider which characters are sufficient to obtain these possible partial augmentations using the command \texttt{HeLP\_WithGivenOrder}. Also in the case of the first Conway group we will use stronger results obtainable by the HeLP method than those given in \cite{BovdiKonovalovConway}.

We will denote by $\chi_{i}$ the $i$-th irreducible complex character of a group $G$ as given in the \texttt{GAP} character table library \cite{CTblLib}. We will also use names for conjugacy classes as in \cite{CTblLib}. The statements we will need about the $p$-blocks of various groups, their defect and their corresponding Brauer trees can all be derived from \texttt{GAP}, but they are also given in \cite{HissLux}.

\textit{Proof of Theorem~\ref{TheoremConway}:} We will study the three cases of interest separately.

\textbf{Case $G = Co_3$:} The information on the orders of elements in $G$ is contained in \cite{ATLAS}. Using this information and \cite[Theorem 1(i)]{BovdiKonovalovConway} we only have to consider units of order $35$ in $\mathrm{V}(\mathbb{Z}G)$. Using $\chi_2$ to compute partial augmentations for units of order $5$ and $\chi_2$, $\chi_3$ to compute the partial augmentations of units of order $35$ we are left with the following possibilities:
$$(\varepsilon_{5a}(u^7), \varepsilon_{5b}(u^7),\varepsilon_{5a}(u),\varepsilon_{5b}(u),\varepsilon_{7a}(u)) \in \{(-4,5,3,12,-14), (-3,4,4,11,-14) \}. $$
Note that there is only one class of elements of order $7$ in $G$, so we do not need to consider the partial augmentations of $u^5$, since the only class in which a partial augmentation at $u^5$ is non-vanishing is $7a$.

$G$ possesses a $5$-block of defect $1$ whose Brauer tree is a line of form
\[
\begin{tikzpicture}
\node[label=north:{$\chi_5$}] at (0,1.5) (1){};
\node[label=north:{$\chi_{29}$}] at (1.5,1.5) (2){};
\node[label=north:{$\chi_{39}$}] at (3,1.5) (3){};
\node[label=north:{$\chi_{35}$}] at (4.5,1.5) (4){};
\node[label=north:{$\chi_{12}$}] at (6,1.5) (5){};
\foreach \p in {1,2,3,4, 5}{
\draw[fill=white] (\p) circle (.075cm);
}
\draw (.075,1.5)--(1.425,1.5);
\draw (1.575,1.5)--(2.925,1.5);
\draw (3.075,1.5)--(4.425,1.5);
\draw (4.575,1.5)--(5.925,1.5);
\end{tikzpicture}.
\] 
All irreducible ordinary characters in this block have only integral values and $5$ is of course not ramified in $\mathbb{Z}$. So we can apply Theorem~\ref{MainProp}. We provide the multiplicities of the needed eigenvalues of $u$, for the two critical distributions of partial augmentations, in Table~\ref{TableCo3}. Here the first entry in each column contains the possible values of $(\varepsilon_{5a}(u^7), \varepsilon_{5b}(u^7),\varepsilon_{5a}(u),\varepsilon_{5b}(u),\varepsilon_{7a}(u))$.

\begin{table}[h]
\begin{tabular}{l|ll|ll}
 & \multicolumn{2}{c|}{$(-4,5,3,12,-14)$} & \multicolumn{2}{c}{$(-3,4,4,11,-14)$}  \\ \hline \hline
 $\chi$ & $\mu(1,u,\chi)$ & $\mu(\zeta_5,u,\chi)$ & $\mu(1,u,\chi)$ & $\mu(\zeta_5,u,\chi)$ \\ \hline
 $\chi_5$ & 33 & 2 & 29 & 3 \\
 $\chi_{29}$ &   & 2119 &  & 2118 \\
 $\chi_{39}$ &   & 7029 &  & 7030 \\
$\chi_{35}$ & & 5071 &  & 5070 \\
$\chi_{12}$  &  & 104 &  & 105 
\end{tabular}
\caption{\label{TableCo3} Multiplicities of eigenvalues for $G = Co_3$ for units of order $35$.
	}
\end{table} 

So using Theorem~\ref{MainProp} with $\xi = 1$ we would have, e.g. if the partial augmentations are $(\varepsilon_{5a}(u^7), \varepsilon_{5b}(u^7),\varepsilon_{5a}(u),\varepsilon_{5b}(u),\varepsilon_{7a}(u)) = (-4,5,3,12,-14)$ that
$$\mu(\zeta_5,u,\chi_{35}) - \mu(\zeta_5,u,\chi_{12}) = 4967 \leq \mu(1,u,\chi_{5}) + \mu(\zeta_5,u,\chi_{5}) - \mu(\zeta_5,u,\chi_{29}) + \mu(\zeta_5,u,\chi_{39}) = 4945, $$
a contradiction. The same argument applies to the other possibility of distributions of partial augmentations of $u$, in which case we get $4965 \leq 4944$.

\textbf{Case $G = Co_2$:}  The information on the orders of elements in $G$ is contained in \cite{ATLAS}. Using this information and \cite[Theorem 2(i)]{BovdiKonovalovConway} we only have to consider units of order $35$ in $\mathbb{Z}G$. Using $\chi_2$ to compute partial augmentations of units of order $5$ and $\chi_2$, $\chi_3$ for order $35$ we get
$$(\varepsilon_{5a}(u^7), \varepsilon_{5b}(u^7),\varepsilon_{5a}(u),\varepsilon_{5b}(u),\varepsilon_{7a}(u)) \in \{(-4,5,3,12,-14), (-3,4,4,11,-14) \}. $$
Again there is only one class of elements of order $7$, so we do not need to consider $u^5$.

Also in this case $G$ possesses a $5$-block of defect 1 with Brauer tree of the form
\[
\begin{tikzpicture}
\node[label=north:{$\chi_4$}] at (0,1.5) (1){};
\node[label=north:{$\chi_{24}$}] at (1.5,1.5) (2){};
\node[label=north:{$\chi_{43}$}] at (3,1.5) (3){};
\node[label=north:{$\chi_{38}$}] at (4.5,1.5) (4){};
\node[label=north:{$\chi_{20}$}] at (6,1.5) (5){};
\foreach \p in {1,2,3,4, 5}{
\draw[fill=white] (\p) circle (.075cm);
}
\draw (.075,1.5)--(1.425,1.5);
\draw (1.575,1.5)--(2.925,1.5);
\draw (3.075,1.5)--(4.425,1.5);
\draw (4.575,1.5)--(5.925,1.5);
\end{tikzpicture}.
\]

All ordinary characters in this block only have integral values.
The necessary multiplicities are provided in Table~\ref{TableCo2}

\begin{table}[h]
\begin{tabular}{l|ll|ll}
 & \multicolumn{2}{c|}{$(-4,5,3,12,-14)$} & \multicolumn{2}{c}{$(-3,4,4,11,-14)$}  \\ \hline \hline
 $\chi$ & $\mu(1,u,\chi)$ & $\mu(\zeta_5,u,\chi)$ & $\mu(1,u,\chi)$ & $\mu(\zeta_5,u,\chi)$ \\ \hline
 $\chi_5$ & 33 & 2 & 29 & 3 \\
 $\chi_{29}$ &   & 3269 & & 3268 \\
 $\chi_{39}$ &  & 13354 &  & 13355 \\
$\chi_{35}$ & & 11396 &  & 11395 \\
$\chi_{12}$  &   & 1254  &   & 1255
\end{tabular}
\caption{\label{TableCo2} Multiplicities of eigenvalues for $G = Co_2$ for units of order $35$.
	}
\end{table} 

In the first case using Theorem~\ref{MainProp} with $\xi = 1$ we get $10142 \leq 10120$ and in the second case $10140 \leq 10119$.

\textbf{Case $G=Co_1$:}  The information on the orders of elements in $G$ is contained in \cite{ATLAS}. Using this information and \cite[Theorem 3(i)]{BovdiKonovalovConway} we only have to consider units of order $55$ and $65$ in $\mathbb{Z}G$. We will first apply the HeLP method here to obtain stronger results than in \cite{BovdiKonovalovConway}. The $2$-, $3$- and $5$-modular Brauer tables are not available in \texttt{GAP}, but the online version of the Atlas, also implemented as a \texttt{GAP}-package \cite{AtlasRep}, contains several representations in these characteristics and we will use a character $\psi$ coming from a $24$-dimensional representation of $G$ over $\mathbb{F}_2$. The values of $\psi$ on classes of interest can be found in Table~\ref{TablePsi}.

\begin{table}[h]
\begin{tabular}{c|c|c|c|c|c|c|c} 
  & $1a$ & $5a$ & $5b$ & $5c$ & $11a$ & $13a$ \\ \hline
  $\psi$ & 24 & $-6$ & $4$ & $-1$ & $2$ & $-2$
\end{tabular}
\caption{\label{TablePsi} A $2$-modular Brauer character for $Co_1$.
	}
\end{table}

The results we obtain using the HeLP method are stronger than those in \cite{BovdiKonovalovConway}, since we include $\psi$. Using $\chi_{2}$, $\chi_{3}$ and $\psi$ we obtain $98$ possibilities for the partial augmentations of units of order $5$. Then using $\chi_{2}$, $\chi_{4}$, $\chi_{5}$ and $\psi$ we can exclude the existence of units of order $65$ in $\mathbb{Z}G$. For units of order $55$ we apply the characters $\chi_{2}$, $\chi_{3}$, $\chi_{4}$ and $\psi$. Note that there is only one class of elements of order $11$ in $G$.

We get that $(\varepsilon_{5a}(u^{11}), \varepsilon_{5b}(u^{11}), \varepsilon_{5c}(u^{11}), \varepsilon_{5a}(u), \varepsilon_{5b}(u), \varepsilon_{5c}(u), \varepsilon_{11a}(u) )$ is one of the four possibilities
$$  (1, 5, -5 , 1, -6, -5, 11), (1, 6, -6, 1, -5, -6, 11 ), ( 2, 6, -7, 2, -5, -7, 11), (2, 7, -8 , 2, -4, -8, 11).  $$

The principal $11$-block of $G$ has a Brauer tree of the form

\[
\begin{tikzpicture}
\node[label=north:{$\chi_1$}] at (0,1.5) (1){}; 
\node[label=north:{$\chi_{12}$}] at (1.2,1.5) (2){};
\node[label=north:{$\chi_{34}$}] at (2.4,1.5) (3){};
\node[label=north:{$\chi_{41}$}] at (3.6,1.5) (4){};
\node[label=north:{$\chi_{64}$}] at (4.8,1.5) (5){};
\node[label=north:{$\chi_{79}$}] at (6,1.5) (6){};
\node[label=north:{$\chi_{85}$}] at (7.2,1.5) (7){};
\node[label=north:{$\chi_{73}$}] at (8.4,1.5) (8){};
\node[label=north:{$\chi_{60}$}] at (9.6,1.5) (9){};
\node[label=north:{$\chi_{38}$}] at (10.8,1.5) (10){};
\node[label=north:{$\chi_{14}$}] at (12,1.5) (11){};
\foreach \p in {1,2,3,4,5,6,7,8,9,10,11}{
\draw[fill=white] (\p) circle (.075cm);
}
\draw (.075,1.5)--(1.125,1.5);
\draw (1.275,1.5)--(2.325,1.5);
\draw (2.475,1.5)--(3.525,1.5);
\draw (3.675,1.5)--(4.725,1.5);
\draw (4.875,1.5)--(5.925,1.5);
\draw (6.075,1.5)--(7.125,1.5);
\draw (7.275,1.5)--(8.325,1.5);
\draw (8.475,1.5)--(9.525,1.5);
\draw (9.675,1.5)--(10.725,1.5);
\draw (10.875,1.5)--(11.925,1.5);
\end{tikzpicture}.
\]

Also in this block all ordinary characters only have integral values.
The necessary multiplicities are provided in Table~\ref{TableCo1}.
{\footnotesize
\begin{table}[h]
\begin{tabular}{l|ll|ll|ll|ll} 
 & \multicolumn{2}{c|}{$(1, 5, -5 , 1, -6, -5, 11)$} &  \multicolumn{2}{c|}{$(1, 5, -5 , 1, -6, -5, 11)$} & \multicolumn{2}{c|}{$(1, 5, -5 , 1, -6, -5, 11)$} & \multicolumn{2}{c}{$(1, 5, -5 , 1, -6, -5, 11)$} \\ \hline \hline
 $\chi$ & $\mu(1,u,\chi)$ & $\mu(\zeta_{11},u,\chi)$ & $\mu(1,u,\chi)$ & $\mu(\zeta_{11},u,\chi)$ & $\mu(1,u,\chi)$ & $\mu(\zeta_{11},u,\chi)$  & $\mu(1,u,\chi)$ & $\mu(\zeta_{11},u,\chi)$\\ \hline
 $\chi_1$ & 1 & 0 & 1 & 0 & 1 & 0 & 1 & 0  \\
 $\chi_{12}$ &  & 5668 & & 5670  & & 5638 & & 5640 \\
 $\chi_{34}$ &  & 138600 & & 138600 & & 138520  & & 138520  \\
$\chi_{41}$ &  & 391385 & &  391385 & & 391350  & & 391350 \\
$\chi_{64}$  &  & 1929876 & &  1929870 & & 1929856  & & 1929850 \\
$\chi_{79}$  &  & 4495195 & &  4495195 & & 4495195  & & 4495195 \\
$\chi_{85}$  &  & 5326485 & &  5326495  & & 5326570  & & 5326580 \\
$\chi_{73}$  &  & 3734505 & & 3734505 & & 3734540  & & 3734540 \\
$\chi_{60}$  &  & 1522180 & & 1522180  & &  1522180  & & 1522180 \\
$\chi_{38}$  &  & 297293  & & 297285 & & 297303  & & 297295 \\
$\chi_{14}$  &  & 6885 & & 6875 & & 6880  & & 6870
\end{tabular}
\caption{\label{TableCo1} Multiplicities of eigenvalues for $G = Co_1$ for units of order $55$.
	}
\end{table}
}

From the four possibilities we obtain by Theorem~\ref{MainProp} the inequalities
$290408 \leq 290389$, $290410 \leq 290391$, $290423 \leq 290304$ and $290425 \leq 290306$, all of which do not hold. \hfill \qed

\begin{remark} In the proof of Theorem~\ref{TheoremConway} to handle the first Conway group $Co_1$ we used Theorem~\ref{MainProp} with $p=11$ instead of $p=5$ for two reasons. One is to show that Theorem~\ref{MainProp} can also be applied with other primes than $5$. The other, more importantly, is that the $11$-Brauer trees can be computed using \texttt{GAP} which can not be done as easily with $5$-Brauer trees, since the $5$-Brauer table is not available in \texttt{GAP}. The 5-Brauer trees are given in \cite{HissLux}, but the calculations with $p=11$ are easier to verify for a reader who does not have \cite{HissLux} available. Using the first Brauer tree given in \cite[Section 6.22]{HissLux} and Theorem~\ref{MainProp} with $p = 5$ and $\xi = 1$ one can also get the same result. 
\end{remark}

\begin{remark} Of the $17$ sporadic groups studied by Bovdi, Konovalov et al. the only group for which they could not prove the Prime Graph Question, apart from the Conway groups, was the O'Nan simple group \cite{BovdiKonovalovONan}. Here it remains to exclude the existence of units of order $33$ and $57$. Theorem~\ref{MainProp} can be applied here with $p=11$ to handle units of order $33$, but not for order $57$. This is not possible since for $p \in \{3,19 \}$ every $p$-block of $G$ which is a Brauer Tree Algebra contains characters such that $p$ is ramified in the ring of values of these characters.
\end{remark}

\section*{Acknowledgements}
The setting handled in this paper was discussed at various stages with Alexander Konovalov, Andreas Bächle, Alexander Thumm and Eugenio Giannelli. I am thankful to them all for their input.

I would also like to thank the referee whose comments helped to improve the readability, particularly of the combinatorial proofs, significantly.

\bibliographystyle{amsalpha}
\bibliography{ConwayPQ}

\end{document}